\documentclass[11pt]{amsart}

\usepackage{amsmath}
\usepackage{amssymb}
\usepackage{mathtools}
\usepackage[T1]{fontenc}
\usepackage[english]{babel}
\usepackage[hidelinks]{hyperref}
\usepackage[margin=1.15in]{geometry}

\newtheorem{thm}{Theorem}[section]
\newtheorem{lem}[thm]{Lemma}
\newtheorem{prop}[thm]{Proposition}
\newtheorem{cor}[thm]{Corollary}

\theoremstyle{definition}
\newtheorem{defn}[thm]{Definition}
\newtheorem{example}[thm]{Example}

\newtheorem{remark}[thm]{Remark}

\newcommand{\R}{\mathbb{R}}

\newcommand{\ip}[2]{\langle #1,\, #2 \rangle}

\newcommand{\bn}[2]{{\textstyle\binom{#1}{#2}}}

\DeclareMathOperator{\Tr}{Tr}
\DeclareMathOperator{\Sym}{Sym}

\DeclareMathOperator{\diag}{diag}

\DeclareMathOperator{\PG}{PG}

\begin{document}

\title[Finite free convolution via reproducing kernels]{Finite free convolution via reproducing kernels \\
and squarefree algebras}

\author[T.~Sinclair]{Thomas Sinclair}
\address{Department of Mathematics, Purdue University\\ 150 N.\ University St.\\ West Lafayette, IN 47907}
\email{tsincla@purdue.edu}
\thanks{The author was partially supported by NSF grant DMS-2055155.}

\date{\today}

\begin{abstract}
    We give a structural account of the finite free convolutions of Marcus, Spielman, and Srivastava in terms of reproducing kernel inner products on polynomial spaces and a multilinear model over the squarefree algebra. In this model, additive convolution becomes algebra multiplication, and the nilpotent logarithm linearizes it, recovering the finite free cumulants of Arizmendi and Perales. This perspective leads to a class $\mathcal{LC}_n$ of multilinear polynomials characterized by nonpositivity of higher-order cumulants, closed under additive convolution and satisfying several key permanence properties associated with negatively dependent measures. We show that every graph Laplacian pencil belongs to this class, with higher-order cumulants given by Hamiltonian cycle counts in induced subgraphs.
\end{abstract}

\maketitle

\section{Introduction}\label{sec:intro}

Finite free convolution, introduced by Marcus, Spielman, and
Srivastava~\cite{MSS15,MSS22}, plays a central role in the theory
of interlacing polynomials and the study of real stability and real-rootedness properties of polynomials. While the theory was from the inception grounded in part by Voiculescu's theory of free probability and free convolutions \cite{Voi86, Voi92}, the subject has rapidly developed into a fascinating bridge between the theory of real polynomials, combinatorics, and operator algebraic methods \cite{AFPU, AP18}. Although the additive and multiplicative finite free convolutions share many formal similarities, they are typically presented through distinct combinatorial formulas, and the structural relationship between them is not immediately apparent.

The starting point of this paper is the observation that both operations arise naturally from a single reproducing kernel.  The apolar kernel
\[
K_n(\xi,t)=(1+\xi t)^n
\]
gives rise to the finite free multiplicative and additive convolutions through pushing the kernel forward under the multiplication and addition maps, respectively.  In the resulting coordinates, multiplicative convolution becomes coefficientwise multiplication, while additive convolution becomes binomial convolution.
Moreover, the underlying structure becomes considerably simpler after
passing to a multilinear model.  Let
\[
\mathcal A_n= \mathbb R[t_1,\dots,t_n]/(t_1^2,\dots,t_n^2)
\]
denote the squarefree algebra.  Under multilinear symmetrization, the finite free convolutions are transported to operations on \(\mathcal A_n\).  In this setting multiplicative convolution becomes the Hadamard product, while additive convolution becomes ordinary squarefree algebra multiplication.

Once additive convolution is identified with multiplication in a nilpotent algebra, it becomes natural to study its logarithmic coordinates.  The nilpotent logarithm linearizes additive convolution and produces a family of multilinear cumulants satisfying
\[
    \kappa_S(f\boxplus g)
    =\kappa_S(f)+\kappa_S(g).
\]
After a simple rescaling, these cumulants recover the finite free
cumulants introduced by Arizmendi and Perales~\cite{AP18}.  From
this perspective, finite free cumulants arise not as an auxiliary
construction but as the ordinary logarithm of the squarefree algebra.

The logarithmic viewpoint suggests a natural class \(\mathcal{LC}_n\) of multilinear polynomials whose higher-order cumulants are nonpositive.  We show that this class is closed under additive convolution, external fields, conditioning, and marginalization, and that its coefficient functions are log-submodular.  These are among the characteristic permanence properties appearing in the theory of negatively dependent measures~\cite{Pem00,BBL09}.

Finally, we illustrate the theory through determinantal examples.  For graph Laplacian pencils, the trace--log expansion admits a striking combinatorial interpretation: the cumulants are given by Hamiltonian cycle counts in induced subgraphs.  As a consequence, every graph Laplacian pencil belongs to \(\mathcal{LC}_n\).  We also discuss related constructions arising from edge Laplacians.

The paper is organized as follows.
Section~\ref{sec:apolar} introduces the apolar inner product, its reproducing kernel, and the $\beta_n$-transform, and derives kernel-level formulas for both convolutions.
Section~\ref{sec:multilinear} develops the multilinear model, showing that additive convolution is algebra multiplication and that symmetrization recovers the univariate theory.
Section~\ref{sec:cumulants} introduces the nilpotent logarithm, proves cumulant linearization, and establishes the connection to the finite free cumulants of Arizmendi--Perales~\cite{AP18}.
Section~\ref{sec:logconcave} defines $\mathcal{LC}_n$ and proves closure under additive convolution.
Section~\ref{sec:laplacian} derives the trace--log formula, computes the cumulants of Laplacian pencils, and develops the edge Laplacian and example.
The appendix records a projective-geometric interpretation of the coefficients appearing in the $q$-additive finite free convolution.

\section{Apolar Kernels and Finite Free Convolution}
\label{sec:apolar}

Let $\R_n[t]$ denote the space of real polynomials of degree at most~$n$.  The \emph{apolar inner product} is defined on monomials by
\[
  \ip{t^j}{t^k}_{\mathrm{ap}}
  := \bn{n}{k}^{-1}\, \delta_{jk},
\]
or equivalently, for
$p(t) = \sum_{k=0}^n p_k t^k$ and
$q(t) = \sum_{k=0}^n q_k t^k$,
\[
  \ip{p}{q}_{\mathrm{ap}}
  = \sum_{k=0}^n \bn{n}{k}^{-1}\, p_k\, q_k.
\]
Under the inversion
\begin{equation}\label{eq:star-inversion}
    q(t)\mapsto q^\star(t) := (-t)^n q(-1/t)
\end{equation}
we have that $\ip{p}{q^\star}_{\mathrm{ap}}$ agrees with the standard apolar pairing as in \cite[Chapter IV]{Marden} or \cite{Rez92}; see also \cite{Leake} for a modern treatment of apolarity in the context of real stability.

\begin{prop}\label{prop:reproducing-kernel}
The function $K_n(\xi,t) := (1+\xi t)^n$ is the reproducing kernel for $(\R_n[t],\ip{\cdot}{\cdot}_{\mathrm{ap}})$. That is, for all $p\in\R_n[t]$,
\[
  p(\xi) = \ip{p(t)}{K_n(\xi,t)}_{\mathrm{ap}}.
\]
\end{prop}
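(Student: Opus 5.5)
The plan is a direct coefficient computation. Fix $\xi\in\R$ and regard $K_n(\xi,t)$ as a polynomial in $t$. By the binomial theorem,
\[
  K_n(\xi,t) = (1+\xi t)^n = \sum_{k=0}^n \bn{n}{k}\, \xi^k\, t^k .
\]
So $K_n(\xi,\cdot)$ lies in $\R_n[t]$, and its $k$-th coefficient is $\bn{n}{k}\xi^k$. Hence the inner product $\ip{p(t)}{K_n(\xi,t)}_{\mathrm{ap}}$ is well defined. It is taken in the variable $t$, with $\xi$ treated as a parameter.

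Next, take $p(t)=\sum_{k=0}^n p_k t^k$ and insert these coefficients into the coefficientwise formula for the apolar inner product:
\[
  \ip{p(t)}{K_n(\xi,t)}_{\mathrm{ap}}
  = \sum_{k=0}^n \bn{n}{k}^{-1}\, p_k\, \bn{n}{k}\,\xi^k
  = \sum_{k=0}^n p_k \xi^k = p(\xi).
\]
The binomial weights cancel exactly. This cancellation is the reason the weight $\bn{n}{k}^{-1}$ was chosen, and it gives the reproducing property.

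For completeness, I would note why this identifies \emph{the} reproducing kernel. The apolar form is positive definite, since all weights $\bn{n}{k}^{-1}$ are positive. On a finite-dimensional inner product space, the Riesz representation theorem gives a unique representer for each evaluation functional $p\mapsto p(\xi)$. We have just shown that this representer is $K_n(\xi,\cdot)$.

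There is no real obstacle here. The only point needing care is bookkeeping: the pairing must be in the variable $t$, and $K_n(\xi,\cdot)$ must have degree at most $n$ so that it lies in $\R_n[t]$. Both follow immediately from the binomial expansion.
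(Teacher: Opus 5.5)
Your proposal is correct and is essentially the paper's proof: you expand $(1+\xi t)^n$ binomially and pair coefficientwise, and the weights $\bn{n}{k}^{-1}$ cancel the binomial coefficients to give $\sum_k p_k\xi^k = p(\xi)$. Your added remark on positive definiteness and Riesz uniqueness is a harmless extra that the paper leaves implicit.
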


\begin{proof}
Expanding and using orthogonality,
\[
  \ip{p(t)}{(1+\xi t)^n}_{\mathrm{ap}}
  = \sum_{k=0}^n p_k \cdot \bn{n}{k}\,\xi^k
  \cdot \bn{n}{k}^{-1}
  = \sum_{k=0}^n p_k\,\xi^k
  = p(\xi). \qedhere
\]
\end{proof}

\begin{remark}
The apolar kernel
\[
  (1+\xi t)^n
\]
is a finite-dimensional analog of the Fischer--Fock kernel
\[
  e^{\xi t}
  =
  \sum_{k\ge 0}\frac{(\xi t)^k}{k!}.
\]
Indeed,
\[
  \left(1+\frac{\xi t}{n}\right)^n
  \longrightarrow
  e^{\xi t},
  \qquad n\to\infty,
\]
while for each fixed \(k\),
\[
  \frac{\binom{n}{k}}{n^k}
  \longrightarrow
  \frac{1}{k!}.
\]
Thus the apolar pairing may be viewed as a finite-level, or binomial, version of the Fischer--Fock inner product.
\end{remark}

\begin{defn}\label{def:beta}
For $p(t) = \sum_{k=0}^n p_k\, t^k \in \R_n[t]$, the
\emph{$\beta_n$-transform} is
\[
  \beta_n(p)_k := \frac{p_k}{\binom{n}{k}},
  \qquad k=0,1,\dots,n.
\]
Equivalently, $p(t) = \sum_{k=0}^n \bn{n}{k}\, \beta_n(p)_k\,t^k$.
\end{defn}

In these coordinates the apolar inner product takes the form
\[
  \ip{p}{q}_{\mathrm{ap}}
  = \sum_{k=0}^n \bn{n}{k}\,\beta_n(p)_k\,\beta_n(q)_k.
\]
We will see the full significance of the $\beta_n$-transform later, where $\beta_n(p)_k$ is exactly the common value of the multilinear coefficients of the multiaffine symmetrization lift of $p$ on sets of size~$k$.

\begin{defn}
Let
\[
  p(t)=\sum_{k=0}^n \bn{n}{k}\, \hat p_k t^k,
  \qquad
  q(t)=\sum_{k=0}^n \bn{n}{k}\, \hat q_k t^k.
\]

The \emph{finite free multiplicative convolution} of $p$ and $q$
is
\[
  p\boxtimes_n q
  :=
  \sum_{k=0}^n
  \bn{n}{k}\, \hat p_k\, \hat q_k\, t^k.
\]

The \emph{finite free additive convolution} of $p$ and $q$
is
\[
  p\boxplus_n q
  :=
  \sum_{k=0}^n
  \bn{n}{k}
  \left(
    \sum_{a+b=k}
    \bn{k}{a}\,
    \hat p_a\, \hat q_b
  \right)t^k.
\]
\end{defn}

\begin{remark}
The finite free convolutions are most naturally expressed in the
$\beta_n$-coordinates.  If
\[
  \beta_n(p)_k = \hat p_k,
  \qquad
  \beta_n(q)_k = \hat q_k,
\]
then multiplicative convolution is coefficientwise multiplication,
while additive convolution is binomial convolution:
\[
  \beta_n(p\boxtimes_n q)_k
  =
  \beta_n(p)_k\,\beta_n(q)_k,
\]
and
\[
  \beta_n(p\boxplus_n q)_m
  =
  \sum_{a+b=m}
  \bn{m}{a}\,
  \beta_n(p)_a\,\beta_n(q)_b.
\]

Several equivalent normalizations appear in the literature, especially in connection with characteristic polynomials and interlacing methods, where reversed indexing and alternating sign conventions are natural. Specifically, in \cite[Definitions 1.1 and 1.4]{MSS22} the finite free additive and multiplicative convolutions are defined as $(p^\star \boxplus_n q^\star)^\star$ and $(p^\star\boxtimes_n q^\star)^\star$, respectively, using our conventions.
\end{remark}

Let $\mathbb R_n[t_1,\dotsc,t_k]$ be the set of real multivariate polynomials of \emph{maximal} degree $n$ in each variable, so that $\mathbb R_n[t_1,\dotsc,t_k]\cong \mathbb R_n[t]^{\otimes k}$.
On $\mathbb R_n[s,t] \cong \R_n[s]\otimes\R_n[t]$ we use the tensor product apolar pairing,
which is given explicitly by
\[
  \ip{s^i t^j}{s^k t^\ell}_{\mathrm{ap}}
  = \bn{n}{k}^{-1}\bn{n}{\ell}^{-1}\,\delta_{ik}\,\delta_{j\ell}.
\]
We have a natural homomorphic embedding $\mathbb R_n[u]\hookrightarrow \mathbb R_n[s,t]$ given by $u\mapsto st$. We can push the apolar kernel forward under this map so that
\[
  K_n(\xi,st) = (1+\xi st)^n
  = \sum_{k=0}^n \bn{n}{k}\,\xi^k\, s^k t^k.
\]

\begin{prop}\label{prop:mult-conv-kernel}
For $p,q\in\R_n[t]$,
\[
  \ip{p(s)\,q(t)}{K_n(\xi,st)}_{\mathrm{ap}}
  = \sum_{k=0}^n \bn{n}{k}\,
  \beta_n(p)_k\,\beta_n(q)_k\,\xi^k = p\boxtimes_n q(\xi).
\]
\end{prop}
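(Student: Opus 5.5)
The plan is a direct coefficient computation in the tensor product apolar pairing, the same way Proposition~\ref{prop:reproducing-kernel} was proved. I would first write $p$ and $q$ in $\beta_n$-coordinates, as in Definition~\ref{def:beta}:
\[
  p(s)q(t)=\sum_{i,j=0}^n \bn{n}{i}\bn{n}{j}\,\beta_n(p)_i\,\beta_n(q)_j\, s^i t^j .
\]
Next I would use the kernel expansion already displayed,
\[
  K_n(\xi,st)=\sum_{k=0}^n \bn{n}{k}\xi^k s^k t^k ,
\]
and pair the two expansions term by term. Bilinearity reduces everything to the monomial pairings $\ip{s^it^j}{s^kt^k}_{\mathrm{ap}}$. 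By the displayed tensor formula, such a pairing vanishes unless $i=j=k$, and when $i=j=k$ it equals $\bn{n}{k}^{-2}$. So only the diagonal terms $i=j=k$ survive. The $k$-th of these contributes
\[
  \bn{n}{k}^2\beta_n(p)_k\beta_n(q)_k\cdot \bn{n}{k}\xi^k\cdot \bn{n}{k}^{-2}
  =\bn{n}{k}\beta_n(p)_k\beta_n(q)_k\,\xi^k .
\]
Summing over $k$ gives the middle expression of the statement.

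The final equality is then immediate from the definition of $\boxtimes_n$, with $\hat p_k=\beta_n(p)_k$ and $\hat q_k=\beta_n(q)_k$. It can also be read as $\beta_n(p\boxtimes_n q)_k=\beta_n(p)_k\beta_n(q)_k$, which is the identity noted in the remark after the definition of $\boxtimes_n$.

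There is no real obstacle. The only point needing care is bookkeeping: the kernel coefficient $\bn{n}{k}$ and the two factors $\bn{n}{k}$ coming from the $\beta_n$-coordinates must be matched against the $\bn{n}{k}^{-2}$ from the tensor pairing, so that exactly one binomial factor remains.
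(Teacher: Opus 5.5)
Your proposal is correct and is essentially the paper's proof. Both expand $K_n(\xi,st)$ and use the diagonal orthogonality of the tensor apolar pairing, so only the $s^kt^k$ terms survive, and then match the binomial factors. The only cosmetic difference is that the paper pairs the raw coefficients $p_k q_k$ against $\binom{n}{k}\xi^k\binom{n}{k}^{-2}$ and converts to $\beta_n$-coordinates at the end, whereas you convert at the start.
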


\begin{proof}
Since $K_n(\xi,st)$ is diagonal in $(s,t)$, expanding and
using orthogonality gives
\[
  \sum_{k=0}^n p_k\, q_k\, \bn{n}{k}\,\xi^k\,
  \bn{n}{k}^{-1}\bn{n}{k}^{-1}
  = \sum_{k=0}^n \binom{n}{k}\,
  \beta_n(p)_k\,\beta_n(q)_k\,\xi^k. \qedhere
\]
\end{proof}

Thus multiplicative convolution is coefficientwise
multiplication in $\beta_n$-coordinates.

Under the pushforward $u^k\mapsto (s+t)^k$ the kernel $K_n$ becomes
\[
  K_n(\xi,s+t) = (1+\xi(s+t))^n
  = \sum_{k=0}^n \bn{n}{k}\,\xi^k
  \sum_{a+b=k}\bn{k}{a}\, s^a t^b.
\]

\begin{prop}\label{prop:add-conv-kernel}
For $p,q\in\R_n[t]$,
\[
  \ip{p(s)\,q(t)}{K_n(\xi,s+t)}_{\mathrm{ap}}
  = \sum_{\substack{a,b\ge 0\\ a+b\le n}}
  p_a\, q_b\,
  \frac{\binom{a+b}{a}}{\binom{n}{a}\binom{n}{b}}\,
  \binom{n}{a+b}\,\xi^{a+b} = p\boxplus_n q(\xi).
\]
\end{prop}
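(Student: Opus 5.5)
The plan mirrors the proof of Proposition~\ref{prop:mult-conv-kernel}: expand both sides of the pairing in the monomial basis $\{s^a t^b\}$ and use orthogonality of the tensor-product apolar pairing. Write $p(s)q(t)=\sum_{a,b} p_a q_b\, s^a t^b$. For the kernel, expand as already displayed,
\[
  K_n(\xi,s+t)=\sum_{k=0}^n \bn{n}{k}\xi^k\sum_{a+b=k}\bn{k}{a}s^a t^b .
\]
This expresses the coefficient of $s^a t^b$ as $\bn{n}{a+b}\bn{a+b}{a}\xi^{a+b}$ when $a+b\le n$, and $0$ otherwise. Since $a,b\le a+b\le n$, these terms lie in $\R_n[s,t]$, so the pairing makes sense.

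Next I will apply the pairing rule $\ip{s^a t^b}{s^{a'}t^{b'}}_{\mathrm{ap}}=\bn{n}{a}^{-1}\bn{n}{b}^{-1}\delta_{aa'}\delta_{bb'}$ termwise. Only diagonal terms survive, and the pairing equals
\[
  \sum_{a+b\le n} p_a q_b\,\frac{\bn{a+b}{a}}{\bn{n}{a}\bn{n}{b}}\bn{n}{a+b}\xi^{a+b},
\]
which is the first claimed equality. Terms of $p(s)q(t)$ with $a+b>n$ pair to zero, so no truncation issue arises.

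For the second equality, I substitute $p_a=\bn{n}{a}\hat p_a$ and $q_b=\bn{n}{b}\hat q_b$ from Definition~\ref{def:beta}. The factors $\bn{n}{a}\bn{n}{b}$ then cancel. Grouping by $k=a+b$ gives
\[
  \sum_{k=0}^n\bn{n}{k}\Bigl(\sum_{a+b=k}\bn{k}{a}\hat p_a\hat q_b\Bigr)\xi^k ,
\]
which is exactly the definition of $p\boxplus_n q$ evaluated at $\xi$.

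There is no real obstacle. The only point needing care is the bookkeeping of the two binomial normalizations, namely the $\bn{n}{\cdot}^{-1}$ from the pairing against the $\bn{n}{\cdot}$ from the $\beta_n$-coordinates, and checking that the range $a+b\le n$ matches the range $k\le n$ in the definition.
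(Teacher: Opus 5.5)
Your proposal is correct and follows the same approach as the paper's proof: expand $K_n(\xi,s+t)$ in the monomial basis $\{s^a t^b\}$, apply orthogonality of the tensor apolar pairing, then pass to $\beta_n$-coordinates to match the definition of $\boxplus_n$. You simply write out the bookkeeping that the paper leaves implicit, and it checks out, including the range $a+b\le n$.
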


\begin{proof}
Expanding $K_n(\xi,s+t)$ and using orthogonality gives the
result.
\end{proof}

\section{The Multilinear Model}
\label{sec:multilinear}

We now pass to a multilinear model in which both convolutions
become simultaneously transparent. In order to do so, we now define the \emph{squarefree algebra}.

Let $\mathcal{A}_n := \R[t_1,\dots,t_n]/(t_1^2,\dots,t_n^2)$
with monomial basis $\{t^S : S\subseteq [n]\}$, where $t^S := \prod_{i\in S} t_i$ and $t^\varnothing := 1$.
Multiplication is determined by
\begin{equation}\label{eq:mult-rule}
  t^A \cdot t^B =
  \begin{cases}
    t^{A\cup B}, & A\cap B = \varnothing,\\
    0,            & A\cap B \neq \varnothing.
  \end{cases}
\end{equation}
We equip $\mathcal{A}_n$ with the inner product making $\{t^S\}_{S\subseteq[n]}$ orthonormal.

\begin{prop}\label{prop:ml-kernel}
The reproducing kernel for $(\mathcal{A}_n, \ip{\cdot}{\cdot})$
is
\[
  K(\xi,t) = \prod_{i=1}^n (1+\xi_i t_i).
\]
That is, for every $f\in\mathcal{A}_n$,
\[
  f(\xi) = \ip{f(t)}{K(\xi,t)}.
\]
\end{prop}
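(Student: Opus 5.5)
The plan is to mirror the proof of Proposition~\ref{prop:reproducing-kernel}: expand the kernel in the monomial basis and then use orthonormality. Since $\ip{\cdot}{\cdot}$ is real bilinear and makes $\{t^S\}$ orthonormal, it is enough to understand the coefficients of $K(\xi,t)$ in that basis.

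First, I expand the product $\prod_{i=1}^n(1+\xi_i t_i)$ in $\mathcal{A}_n$, treating the $\xi_i$ as scalar parameters. Each factor contributes either $1$ or $\xi_i t_i$, and the factors involve distinct variables. So no product of two $t_i$ with the same index ever appears, and the multiplication rule \eqref{eq:mult-rule} creates no cancellation. This gives
\[
  K(\xi,t)=\sum_{S\subseteq[n]} \xi^S\, t^S, \qquad \xi^S:=\prod_{i\in S}\xi_i.
\]
Next, I write an arbitrary $f\in\mathcal{A}_n$ as $f(t)=\sum_{S} f_S\, t^S$. Here $f(\xi)$ means the multiaffine polynomial $\sum_S f_S\,\xi^S$ evaluated at $\xi\in\R^n$; each class in $\mathcal{A}_n$ has a unique multiaffine representative, so this evaluation is well defined. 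By bilinearity and orthonormality,
\[
  \ip{f(t)}{K(\xi,t)}=\sum_{S,T} f_S\,\xi^T\,\ip{t^S}{t^T}=\sum_S f_S\,\xi^S=f(\xi).
\]

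There is no real obstacle in this argument. The only point needing care is the interpretation of evaluation $f(\xi)$ on the quotient algebra. I address it by fixing the multiaffine representative, or equivalently by noting that the monomial basis identifies $\mathcal{A}_n$ with multiaffine polynomials. I would also note that the kernel is just the tensor product of $n$ copies of the degree-one case of Proposition~\ref{prop:reproducing-kernel}, since $K_1(\xi_i,t_i)=1+\xi_i t_i$ and $\binom{1}{k}=1$. This gives an alternative one-line proof by tensoring reproducing kernels.
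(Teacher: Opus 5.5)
Your proof is correct and matches the paper's argument: expand $\prod_{i=1}^n(1+\xi_i t_i)=\sum_{S\subseteq[n]}\xi^S t^S$, then use orthonormality of $\{t^S\}$ to get $\sum_S f_S\,\xi^S=f(\xi)$. Your additional remarks are valid but not needed for the argument: evaluation is well defined via the multiaffine representative, and the kernel is the tensor product of $n$ copies of the $n=1$ case of Proposition~\ref{prop:reproducing-kernel}.
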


\begin{proof}
Expanding the product,
\[
  K(\xi,t) = \sum_{S\subseteq[n]} \xi^S\, t^S.
\]
Orthonormality gives
$\ip{f(t)}{K(\xi,t)} = \sum_S f_S\,\xi^S = f(\xi)$.
\end{proof}

\bigskip

The symmetrization map
\[
  \Sym:\mathcal{A}_n\longrightarrow\R_n[t],
  \qquad t^S\mapsto t^{|S|},
\]
sends the multilinear kernel to the apolar kernel:
\[
  \Sym\Bigl(\prod_{i=1}^n(1+\xi\, t_i)\Bigr) = (1+\xi\, t)^n.
\]
Under this map, the $\beta_n$-coordinates of a polynomial are exactly its multilinear coefficients: if $f$ is symmetric with $f_S$ depending only on $|S|$, then $\beta_n(\Sym(f))_k = f_S$ for any $S$ with $|S|=k$.
Thus the $\beta_n$-transform identifies the univariate model with the symmetric subspace of $\mathcal{A}_n$.

\bigskip

Let $f,g: 2^{[n]}\to \mathbb R$ be set functions.

\begin{defn}\label{def:add-conv-ml}
The \emph{additive convolution} of $f,g$ is defined as
\[
  (f\boxplus g)_S
  := \sum_{A\subseteq S} f_A\, g_{S\setminus A}
  \qquad\text{for all }S\subseteq[n].
\]
\end{defn}

The following observation is central to everything that follows.

\begin{lem}\label{lem:boxplus-is-product}
Viewing $f,g$ as multiaffine polynomials, additive convolution is simply multiplication in
$\mathcal{A}_n$. That is, for all $f,g\in\mathcal{A}_n$,
\[
  f\boxplus g = fg \mod (t_1^2,\dots,t_n^2),
\]
with the evident abuse of notation of using equality to denote the canonical identification.
\end{lem}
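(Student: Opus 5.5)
The plan is to compute the product $fg$ directly in the monomial basis and compare coefficients with Definition~\ref{def:add-conv-ml}. Identify the set function $f$ with the multiaffine polynomial $f(t)=\sum_{A\subseteq[n]} f_A\, t^A$, and similarly for $g$. Its image in $\mathcal{A}_n$ is the residue class of this polynomial. Since the quotient map $\R[t_1,\dots,t_n]\to\mathcal{A}_n$ is a ring homomorphism, the class of the ordinary polynomial product $fg$ equals the product of the classes. It therefore suffices to compute the product in $\mathcal{A}_n$ using the multiplication rule \eqref{eq:mult-rule}.

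By bilinearity of multiplication,
\[
  f\cdot g=\sum_{A,B\subseteq[n]} f_A\,g_B\; t^A t^B .
\]
By \eqref{eq:mult-rule}, every term with $A\cap B\neq\varnothing$ vanishes. The remaining terms contribute $f_A g_B\, t^{A\cup B}$, where $A\cup B$ is a disjoint union.

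Next, regroup the surviving terms by $S:=A\cup B$. The pairs $(A,B)$ of disjoint subsets with $A\cup B=S$ are in bijection with the subsets $A\subseteq S$, via $B=S\setminus A$. Hence the coefficient of $t^S$ in $fg$ is
\[
  \sum_{A\subseteq S} f_A\, g_{S\setminus A}=(f\boxplus g)_S .
\]
Since $\{t^S\}$ is a basis of $\mathcal{A}_n$, this gives the claimed identity.

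There is no real obstacle here. The only point needing care is to justify that reduction modulo $(t_1^2,\dots,t_n^2)$ kills exactly the monomials $t^At^B$ with $A\cap B\ne\varnothing$. Such a monomial contains some $t_i^2$ and so lies in the ideal. When $A\cap B=\varnothing$, the monomial is squarefree, and the squarefree monomials remain linearly independent in the quotient. This independence is precisely the statement that $\{t^S\}$ is a basis of $\mathcal{A}_n$, which the paper assumes.
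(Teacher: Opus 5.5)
Your proposal is correct and follows the paper's proof essentially verbatim: expand $fg$ in the monomial basis, discard the terms with $A\cap B\neq\varnothing$ using the multiplication rule, and reindex the surviving disjoint pairs by $A\subseteq S$ with $B=S\setminus A$. Your extra remarks, that the quotient map is a ring homomorphism and that the squarefree monomials are linearly independent, are valid justifications the paper leaves implicit.
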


\begin{proof}
By the multiplication rule~\eqref{eq:mult-rule}, the coefficient
of $t^S$ in $fg$ is
\[
  (fg)_S = \sum_{\substack{A,B\subseteq[n]\\
  A\cap B=\varnothing,\; A\cup B = S}} f_A\, g_B
  = \sum_{A\subseteq S} f_A\, g_{S\setminus A},
\]
since $A\cap(S\setminus A)=\varnothing$ automatically.
\end{proof}

For set functions $f,g: 2^{[n]}\to\mathbb R$ we also have the pointwise (Hadamard) product $(f\circ g)_S = f_S\, g_S$ for all $S\subseteq [n]$.

\begin{defn}\label{defn:multiaffine-lift}
    Given $p\in\mathbb R_n[t]$, we define the \emph{multiaffine lift} $\widetilde p$ of $p$ to be the unique symmetric multiaffine polynomial so that $\Sym(\widetilde p) = p$, that is, $\widetilde p_S = \beta_{n}(p)_{|S|}$ for all $S\subseteq [n]$.
\end{defn}

\begin{prop}\label{prop:sym-recovery}
Under symmetrization, the additive convolution and Hadamard product recover the univariate convolutions $\boxplus_n$ and $\boxtimes_n$, respectively. That is, 
\[
p\boxplus_n q = \Sym(\widetilde p \boxplus \widetilde q), \qquad p\boxtimes_n q = \Sym(\widetilde p \circ \widetilde q).
\]
\end{prop}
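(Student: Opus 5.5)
The plan is to compute coefficients directly in the multilinear model and then apply $\Sym$, using two facts already recorded: $\widetilde p_S=\beta_n(p)_{|S|}$ (Definition~\ref{defn:multiaffine-lift}), and for a symmetric $f\in\mathcal A_n$ one has $\beta_n(\Sym f)_k=f_S$ for any $|S|=k$. Because a polynomial in $\R_n[t]$ is determined by its $\beta_n$-coordinates, it is enough to show two things: that $\widetilde p\boxplus\widetilde q$ and $\widetilde p\circ\widetilde q$ are symmetric, and that their common values on $k$-sets equal $\beta_n(p\boxplus_n q)_k$ and $\beta_n(p\boxtimes_n q)_k$, respectively.

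The Hadamard product is immediate. We have $(\widetilde p\circ\widetilde q)_S=\beta_n(p)_{|S|}\,\beta_n(q)_{|S|}$, which depends only on $|S|$. Hence
\[
\Sym(\widetilde p\circ\widetilde q)=\sum_{k=0}^n \bn{n}{k}\,\beta_n(p)_k\beta_n(q)_k\,t^k=p\boxtimes_n q,
\]
since there are $\bn{n}{k}$ sets of size $k$ and each contributes $t^k$. This also agrees with Proposition~\ref{prop:mult-conv-kernel}.

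For the additive convolution, fix $S$ with $|S|=m$. By Definition~\ref{def:add-conv-ml},
\[
(\widetilde p\boxplus\widetilde q)_S=\sum_{A\subseteq S}\beta_n(p)_{|A|}\,\beta_n(q)_{m-|A|}.
\]
Grouping the subsets $A$ by $a=|A|$, of which there are $\bn{m}{a}$, gives $\sum_{a+b=m}\bn{m}{a}\beta_n(p)_a\beta_n(q)_b$. This depends only on $m$, so the product is symmetric, and it equals $\beta_n(p\boxplus_n q)_m$ by the definition of $\boxplus_n$. Applying $\Sym$ then yields $\sum_m \bn{n}{m}\bigl(\sum_{a+b=m}\bn{m}{a}\hat p_a\hat q_b\bigr)t^m=p\boxplus_n q$. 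Alternatively, one can use Lemma~\ref{lem:boxplus-is-product} to write $\widetilde p\boxplus\widetilde q=\widetilde p\,\widetilde q$ in $\mathcal A_n$, but the direct count is simpler.

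The only real obstacle is bookkeeping. One has to keep the normalization straight: $\Sym$ sums over all $\bn{n}{k}$ sets, which produces exactly the $\bn{n}{k}$ prefactor built into the $\beta_n$-coordinates. One also has to check that the subset count $\bn{m}{a}$ inside $S$ reproduces the binomial-convolution weight. Both are one-line counting arguments, so no step should be genuinely hard.
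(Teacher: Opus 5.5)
Your proposal is correct and follows essentially the same argument as the paper. The Hadamard case is read off coefficientwise, and the additive case comes from grouping the subsets $A\subseteq S$ by $|A|=a$ to produce the weight $\binom{m}{a}$, after which $\Sym$ supplies the $\binom{n}{m}$ prefactor; you compare with the definition of $\boxplus_n$ rather than with Proposition~\ref{prop:add-conv-kernel}, which amounts to the same thing.
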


\begin{proof}
For symmetric $f$ and $g$ with
$f_S = \beta_n(p)_{|S|}$ and $g_S = \beta_n(q)_{|S|}$,
the multiplicative case is immediate from the definition and Proposition \ref{prop:mult-conv-kernel}.
For additive convolution, grouping by $|A|=j$ in $(f\boxplus g)_S = \sum_{A\subseteq S}f_A\,g_{S\setminus A}$ with $|S|=m$ gives $\binom{m}{j}$ subsets $A\subseteq S$ of size~$j$.  Since $f_A = \beta_n(p)_j$ depends only on $|A|$ (and similarly for $g$), the sum reduces to $\sum_{j=0}^m \binom{m}{j}\beta_n(p)_j\,\beta_n(q)_{m-j}$, recovering Proposition~\ref{prop:add-conv-kernel}.
\end{proof}

Thus, in the multilinear model, the two convolutions are
simultaneously simple: one is the Hadamard product, the other
is algebra multiplication.  No truncation occurs and no
binomial weights appear.

\begin{remark}
Multiplication in $\mathcal{A}_n$ is convolution with
respect to the Boolean lattice $(2^{[n]},\subseteq)$ for the ``diamond product'' defined in \cite{Sin26}. We do
not pursue a more systematic M\"obius-theoretic formulation here, as this will be detailed in forthcoming work.
\end{remark}

\bigskip

The following proposition is modeled closely after Propositions 4.19 and 4.20 in \cite{Choe04} and related arguments in \cite{Hin95}.

\begin{prop}\label{prop:real-stable-squarefree}
Let $p,q\in\mathbb R[t_1,\dots,t_n]$ be multiaffine real stable
polynomials. Then
\[
  pq \mod (t_1^2,\dots,t_n^2)
\]
is real stable.
\end{prop}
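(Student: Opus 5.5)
The plan is to realize $pq \bmod (t_1^2,\dots,t_n^2)$ as the image of a real stable polynomial in $2n$ variables under a linear operator that preserves stability, and to certify that operator with the Borcea--Br\"and\'en symbol criterion. This follows the Choe--Oxley--Sokal--Wagner approach cited before the statement.

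\emph{Step 1: doubling the variables.} Introduce two sets of variables $x=(x_1,\dots,x_n)$ and $y=(y_1,\dots,y_n)$ and put
\[
P(x,y):=p(x)\,q(y).
\]
A product of real stable polynomials in disjoint sets of variables is real stable, so $P$ is real stable. It is also multiaffine in all $2n$ variables. Expanding, the coefficient of $x^A y^B$ in $P$ is $p_A q_B$.

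\emph{Step 2: the merging operator.} For each $i$, define a linear map $T_i$ on polynomials that are affine in $x_i$ and in $y_i$ by
\[
a+b\,x_i+c\,y_i+d\,x_iy_i\;\longmapsto\; a+(b+c)\,t_i ,
\]
with coefficients $a,b,c,d$ in the remaining variables. Let $T=T_1\circ\cdots\circ T_n$. This sends $x^Ay^B$ to $t^{A\cup B}$ when $A\cap B=\varnothing$ and to $0$ otherwise. By Lemma~\ref{lem:boxplus-is-product} and the computation in its proof,
\[
T(P)=\sum_{S}\Bigl(\sum_{A\subseteq S}p_A\,q_{S\setminus A}\Bigr)t^S = pq \bmod (t_1^2,\dots,t_n^2).
\]
So it suffices to show that each $T_i$ preserves real stability; then so does the composite $T$.

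\emph{Step 3: the symbol check, and where the difficulty lies.} Each $T_i$ acts on polynomials of degree at most one in each of $x_i$ and $y_i$. The other variables are treated as parameters, and a single-block stability preserver extends to this setting by the usual argument. I will invoke the Borcea--Br\"and\'en characterization of linear stability preservers on multiaffine polynomials \cite{BBL09}, applied in its real-stable, finite-degree form. In that form, $T_i$ preserves real stability if its symbol $T_i\bigl((x_i+z_1)(y_i+z_2)\bigr)$ is real stable in $(t_i,z_1,z_2)$. Computing the symbol,
\[
T_i\bigl(z_1z_2+z_2x_i+z_1y_i+x_iy_i\bigr)=z_1z_2+(z_1+z_2)\,t_i=e_2(t_i,z_1,z_2).
\]
This is an elementary symmetric polynomial and hence real stable.

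The main obstacle is invoking the right version of the theorem. The criterion has two alternatives: either the symbol is stable, or the rank of $T_i$ is at most two with a degenerate image. I also need the symbol convention, $+z$ versus $-z$, to match the real-stable rather than the complex-stable version. I must check that this choice of signs is the one under which $e_2$ certifies preservation. If it is not, I would fall back on a direct argument: verify by hand that $a+(b+c)t$ has no zero with $\operatorname{Im} t>0$, assuming $a+bx+cy+dxy$ has no zeros in the upper half-plane. This reduces to the two-variable stability inequalities of Choe et al.\ (Prop.~4.19--4.20). Finally, I must treat the degenerate case in which $T(P)$ vanishes identically, which counts as stable by convention.
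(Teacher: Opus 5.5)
Your argument is correct. It differs from the paper's only in how the key local step is certified.

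\textbf{How the two proofs relate.} The paper does not double variables. It notes that $pq$ is already real stable and of degree at most two in each $t_i$, and shows directly that deleting the $t_i^2$-coefficient preserves stability. Fix the other variables in the upper half-plane. Then the roots $r,s$ of $At_i^2+Bt_i+C$ lie outside $\mathbb H$. The root of $Bt_i+C$ is $rs/(r+s)=1/(1/r+1/s)$, and it stays outside $\mathbb H$ because $z\mapsto 1/z$ interchanges the half-planes. Your $T_i$ is exactly this deletion composed with the diagonalization $x_i=y_i=t_i$. So your fallback ``direct argument'' is essentially the paper's proof.

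\textbf{Your Step 3 needs no hedge.} In the real-stable Borcea--Br\"and\'en theorem, real stability of either $T[(z+w)^\kappa]$ or $T[(z-w)^\kappa]$ is sufficient, and the complex version with $(z+w)^\kappa$ also suffices. A zero output is allowed by convention. The full symbol of $T_i$ in all variables is $e_2(t_i,z_1,z_2)$ times the linear factors $(v+w_v)$ of the untouched variables. So your ``extension by the usual argument'' is just the fact that products of stable polynomials in disjoint variables are stable. In fact the composite $T$ has symbol $\prod_j\bigl(w_jw_j'+(w_j+w_j')t_j\bigr)$, so you could invoke the theorem once instead of iterating.

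\textbf{What each route buys.} Yours gives a mechanical certificate, and the theorem's conventions absorb the degenerate cases. The cost is reliance on a deep characterization theorem that is much harder than the statement being proved. The paper's route is elementary and self-contained. However, it must treat degenerate configurations separately, such as $B$ and $C$ vanishing simultaneously at a point of $\mathbb H^{n-1}$, which one rules out by specializing $t_i=0$. The paper leaves these to ``the evident limiting interpretation.''
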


\begin{proof}
It suffices to show that deleting the $t_i^2$-coefficient from
a polynomial of degree at most two in $t_i$ preserves stability.
Write
\[
  F=A t_i^2+B t_i+C.
\]
Fix all variables other than $t_i$ in the upper half-plane. Then
$F$ becomes a one-variable quadratic with no zeros in the upper
half-plane. If its roots are $r,s\notin\mathbb H$, then the zero
of $B t_i+C$ is
\[
  -\frac{C}{B}=\frac{rs}{r+s}
  =\frac{1}{1/r+1/s},
\]
with the evident limiting interpretation in degenerate cases.
Since inversion sends the lower half-plane to the upper half-plane,
$1/r$ and $1/s$ lie in the closed upper half-plane, so
$1/r+1/s$ lies in the closed upper half-plane. Hence its reciprocal
does not lie in $\mathbb H$. Thus $B t_i+C$ has no zero in
$\mathbb H$.

Applying this operation successively to the stable polynomial
$pq$ deletes all square terms and yields
$pq \mod (t_1^2,\dots,t_n^2)$.
\end{proof}

\begin{lem}\label{lem:lift-apolar}
Let $p = \sum_{k=0}^n \bn{n}{k}\, \hat p_k\,t^k\in \mathbb C_n[t]$, and let $\widetilde p$ denote its symmetric multiaffine lift.
Then for every $\xi_1,\ldots,\xi_n\in \mathbb C$,
\[
\left\langle p(t),\prod_{j=1}^n(1+\xi_j t)\right\rangle_{\mathrm{ap}}
=
\widetilde p(\xi_1,\ldots,\xi_n).
\]
\end{lem}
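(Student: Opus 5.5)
The plan is to expand the product kernel in powers of $t$, pair it against $p$ coefficientwise using the definition of the apolar inner product, and then recognize the outcome as $\widetilde p$ evaluated at $\xi$. Since both sides are linear in $p$, this amounts to a single coefficient comparison. The computation is the same one used in Proposition~\ref{prop:reproducing-kernel}, with the diagonal kernel replaced by a product of distinct linear factors.

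First, expand
\[
\prod_{j=1}^n(1+\xi_j t)=\sum_{k=0}^n e_k(\xi)\,t^k,
\qquad
e_k(\xi)=\sum_{|S|=k}\xi^S .
\]
This is a polynomial of degree at most $n$ in $t$, so it lies in $\mathbb C_n[t]$. Here the pairing is taken bilinearly, with no complex conjugation, exactly as the formula defining $\ip{\cdot}{\cdot}_{\mathrm{ap}}$ reads. This convention is the one point I would state explicitly, since the statement allows complex $\xi_j$.

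Next, apply orthogonality of monomials:
\[
\Bigl\langle p,\prod_j(1+\xi_j t)\Bigr\rangle_{\mathrm{ap}}
=\sum_{k=0}^n \bn{n}{k}^{-1}\,p_k\,e_k(\xi)
=\sum_{k=0}^n \hat p_k\,e_k(\xi),
\]
using $p_k=\bn{n}{k}\hat p_k$.

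Finally, by Definition~\ref{defn:multiaffine-lift}, $\widetilde p_S=\beta_n(p)_{|S|}=\hat p_{|S|}$. Grouping the sum $\widetilde p(\xi)=\sum_S \widetilde p_S\,\xi^S$ by $|S|=k$ gives $\sum_k \hat p_k\,e_k(\xi)$, which matches the previous expression.

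There is no real obstacle; the only care needed is the bilinear convention. As a sanity check, taking all $\xi_j$ equal recovers Proposition~\ref{prop:reproducing-kernel}. Alternatively, one could observe that $\Sym$ carries $\prod_j(1+\xi_j t_j)$ to the kernel above, and then invoke Proposition~\ref{prop:ml-kernel}.
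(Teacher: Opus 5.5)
Your proposal is correct and matches the paper's proof: you expand the kernel as $\sum_k e_k(\xi)t^k$, pair it against $p$ so that the binomial weights cancel to give $\sum_k \hat p_k e_k(\xi)$, and identify this sum with $\widetilde p(\xi)$ via $\widetilde p_S=\hat p_{|S|}$. Your note that the pairing must be read bilinearly (no conjugation) for complex $\xi_j$ is a useful clarification that the paper leaves implicit.
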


\begin{proof}
Expanding the product gives
\[
    \prod_{j=1}^n(1+\xi_j t) =
    \sum_{k=0}^n e_k(\xi_1,\ldots,\xi_n)t^k,
\]
where $e_k(x_1,\dotsc,x_n)$ is the $k$-th elementary symmetric polynomial in $n$ variables.
From the definition of the apolar inner product it follows that
\[
\begin{aligned}
\left\langle p(t),\prod_{j=1}^n(1+\xi_j t)\right\rangle_{\mathrm{ap}}
&=
\sum_{k=0}^n
\bn{n}{k}^{-1}\bn{n}{k}\,\widehat p_k\, e_k(\xi_1,\ldots,\xi_n)
  \\
&=
\sum_{k=0}^n \widehat p_k\, e_k(\xi_1,\ldots,\xi_n) =
\widetilde p(\xi_1,\ldots,\xi_n).
\end{aligned}
\]
\end{proof}

The next proposition is Grace's theorem on root location for apolar pairs of polynomials written in the present reproducing-kernel framework. See \cite[Chapter IV, Section 15]{Marden} for the classical apolar formulation.

\begin{prop}\label{prop:apolar-root-location}
Let $C\subset \widehat{\mathbb C}$ be a circular region in the extended complex plane, and set $\iota$ to be the involution
\[
\iota(z):=-1/z.
\]
Suppose that $p\in\mathbb C_n[t]$ has all of its roots in $C$. If $q\in\mathbb C_n[t]$ satisfies
\[
\langle p,q\rangle_{\mathrm{ap}}=0,
\]
then $q$ has at least one root in $\iota(C)$.
\end{prop}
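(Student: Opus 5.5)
The plan is to reduce the statement to Grace's theorem through Lemma~\ref{lem:lift-apolar}, by way of the Grace--Walsh--Szeg\H{o} coincidence theorem. We argue by contradiction: assume $q$ has no root in $\iota(C)$. Since $\iota$ is a M\"obius transformation, $\iota(C)$ is again a circular region and its complement $D:=\widehat{\mathbb C}\setminus\iota(C)$ is a (closed or open) circular region. Write $q(t)=c\prod_{j=1}^n(1+\xi_j t)$. Here a root $r$ of $q$ corresponds to $\xi_j=-1/r=\iota(r)$, and a degree drop corresponds to some $\xi_j=0$, i.e.\ a root at $\infty$. Because $\iota$ is an involution, the hypothesis that every root $r$ of $q$ lies outside $\iota(C)$ (roots at $\infty$ included) is equivalent to $\xi_j=\iota(r_j)\notin C$ for all $j$. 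Note also that $\iota(\infty)=0$ and $\iota(0)=\infty$, so the degenerate cases match: a zero $\xi_j$ is forbidden exactly when $\infty\in\iota(C)$, which happens exactly when $0\in C$.

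By Lemma~\ref{lem:lift-apolar}, $\langle p,q\rangle_{\mathrm{ap}}=\bar c\,\widetilde p(\xi_1,\dots,\xi_n)$, up to the conjugation convention, which is harmless for vanishing. So the hypothesis gives $\widetilde p(\xi_1,\dots,\xi_n)=0$. Now $\widetilde p$ is the symmetric multiaffine polynomial with $\widetilde p(z,\dots,z)=\sum_k\binom nk\hat p_k z^k=p(z)$. The Grace--Walsh--Szeg\H{o} coincidence theorem applies to $\widetilde p$ and the circular region $\widehat{\mathbb C}\setminus C$, which contains all the $\xi_j$. It produces a point $z\in\widehat{\mathbb C}\setminus C$ with $\widetilde p(z,\dots,z)=0$, that is, $p(z)=0$ with $z\notin C$. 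This contradicts the assumption that all roots of $p$ lie in $C$. If $\widehat{\mathbb C}\setminus C$ is not convex, GWS additionally requires $\deg p=n$; when $\deg p<n$ we use the homogeneous/projective convention, under which $p$ has a root at $\infty$. In that case $C$ must contain $\infty$, so $\widehat{\mathbb C}\setminus C$ is a bounded disk or its interior, and that region is convex, so no degree hypothesis is needed.

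The main obstacle is bookkeeping at $\infty$ and in the closed/open boundary cases. Roots at infinity of $p$ (degree $<n$) and of $q$ (some $\xi_j=0$) must be matched correctly with the convention that ``circular region'' means a closed or open disk, disk complement, or half-plane in $\widehat{\mathbb C}$. I would handle this by working with the homogenized forms $P(x,y)$ and $Q(x,y)$ of degree $n$ throughout, so that everything is M\"obius-invariant. If needed, I would first reduce by a M\"obius change of variables to the case where $C$ is a closed disk or half-plane not containing $\infty$, then recover the general case by continuity and limiting arguments. I would cite the classical apolar form of Grace's theorem \cite[Chapter IV, Section 15]{Marden} as the fallback, verifying that $\langle p,q\rangle_{\mathrm{ap}}=0$ is exactly classical apolarity of $p$ and $q^\star$, where $q^\star$ is defined in \eqref{eq:star-inversion}. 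The roots of $q^\star$ are the images $-1/r$ of the roots $r$ of $q$, which explains the appearance of $\iota$.
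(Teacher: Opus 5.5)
Your argument is correct and is essentially the paper's proof: factor $q=c\prod_j(1+\xi_j t)$, turn ``no root of $q$ in $\iota(C)$'' into $\xi_j\notin C$, and combine Lemma~\ref{lem:lift-apolar} with Grace--Walsh--Szeg\H{o} for $\widetilde p$ on $\widehat{\mathbb C}\setminus C$. Like the paper, your factorization tacitly needs $q(0)\neq 0$ (a root at $0$ gives $\xi_j=\infty$), and both of you defer that case to projective conventions; your check of the degree hypothesis in the non-convex case is a point the paper leaves implicit. One small correction: the pairing is complex bilinear, so the lemma gives exactly $\langle p,q\rangle_{\mathrm{ap}}=c\,\widetilde p(\xi_1,\dots,\xi_n)$. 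Drop the $\bar c$ and the conjugation hedge, because under a genuinely Hermitian convention the $\xi_j$ would be conjugated too, and that would not be harmless.
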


\begin{proof}
We prove the contrapositive in the case $q(0)\neq 0$. The general case follows from standard projective conventions or by a limiting argument.

Assume that $q$ has no zero in $\iota(C)$. We write
\[
q(t)=c\prod_{j=1}^n(1+\xi_jt),
\qquad c\neq 0, 
\]
where the roots of $q$ are $-1/\xi_j$. Thus by assumption we have that $\xi_j\not\in C$ for all $j=1,\dotsc,n$.

Let $\Omega=\widehat{\mathbb C}\setminus C$, interpreted as the complementary circular region. By the Grace--Walsh--Szeg\H{o} coincidence theorem (see \cite[Theorem (15,4)]{Marden} or \cite{BB09})  applied to the symmetric multiaffine lift $\widetilde p$, the nonvanishing of $p$ on $\Omega$ implies that
\[
\widetilde p(\xi_1,\ldots,\xi_n)\neq 0,
\]
hence, by the Lemma \ref{lem:lift-apolar},
\[
\langle p,q\rangle_{\mathrm{ap}}
=
c\,\widetilde p(\xi_1,\ldots,\xi_n)
\neq 0.
\]
This proves the contrapositive, hence the result.
\end{proof}

\begin{remark}
    The Grace--Walsh--Szeg\H{o} coincidence theorem can be derived by elementary means independently of Grace's theorem. They are in fact known to be equivalent: see the remarks following the proof of Theorem (15,4) in \cite{Marden}. Thus the proposition's effect is to mainly highlight the reproducing kernel structure as a natural bridge between the two results.
\end{remark}

As a corollary of the above discussion we offer a proof of stability of real rootedness under finite free additive convolution, which is essentially due to Walsh \cite{Wal22} and appears as \cite[Theorem (18,1)]{Marden}. See also \cite{MSS22}. Both this proof and the classical one are applications of the coincidence theorem.

\begin{cor}\label{cor:real-rooted-ffac}
    If $p,q\in \mathbb R_n[t]$ are real rooted, then so is $p\boxplus_n q$
\end{cor}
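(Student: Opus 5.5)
The plan is to combine the multilinear model of Proposition~\ref{prop:sym-recovery} with Proposition~\ref{prop:real-stable-squarefree} and the Grace--Walsh--Szeg\H{o} coincidence theorem. By Proposition~\ref{prop:sym-recovery} we have $p\boxplus_n q=\Sym(\widetilde p\boxplus\widetilde q)$. By Lemma~\ref{lem:boxplus-is-product}, $\widetilde p\boxplus\widetilde q$ is the product $\widetilde p\,\widetilde q$ reduced modulo $(t_1^2,\dots,t_n^2)$. So it suffices to establish three facts:
\begin{enumerate}
\item[(i)] the lifts $\widetilde p$ and $\widetilde q$ are real stable;
\item[(ii)] the reduced product is real stable, which is Proposition~\ref{prop:real-stable-squarefree};
\item[(iii)] the diagonal restriction $t_1=\dots=t_n=t$ of a symmetric real stable multiaffine polynomial is real rooted.
\end{enumerate}

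For (i), note that $\widetilde p$ has real coefficients and is the symmetric multiaffine polynomial whose diagonal restriction is $p$. Since $p$ is real rooted, it has no zeros in the upper half-plane $\mathbb H$, which is a convex circular region. The coincidence theorem then says that for any $\xi_1,\dots,\xi_n\in\mathbb H$ there is $\zeta\in\mathbb H$ with $\widetilde p(\xi)=p(\zeta)\neq 0$, so $\widetilde p$ is real stable. Some care is needed with the degree. If $\deg p<n$, we regard $p$ as having roots at $\infty$ and use the projective form of the coincidence theorem, as in Proposition~\ref{prop:apolar-root-location}; convexity of $\mathbb H$ is what makes this legitimate. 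I expect this degree issue to be the only real technical point, and I would handle it by this convention or by a perturbation and limiting argument. The same reasoning gives $\widetilde q$ real stable.

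For (ii), Proposition~\ref{prop:real-stable-squarefree} applied to $\widetilde p$ and $\widetilde q$ shows that $f:=\widetilde p\,\widetilde q \bmod (t_1^2,\dots,t_n^2)$ is real stable. It is also symmetric, since it is a product of symmetric polynomials, and its coefficients are real.

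For (iii), observe that for symmetric $f$ we have $\Sym(f)(t)=f(t,\dots,t)$. This holds because $\Sym$ sends $t^S$ to $t^{|S|}$, which is exactly what substituting $t_i=t$ does. Real stability is preserved under this diagonalization: if $t\in\mathbb H$, then $(t,\dots,t)\in\mathbb H^n$, so $f(t,\dots,t)\neq0$. A real univariate polynomial with no zeros in $\mathbb H$ is either real rooted or identically zero, since its nonreal roots would come in conjugate pairs. Hence $p\boxplus_n q$ is real rooted. Here the identically zero polynomial is excluded or treated as trivially real rooted, and the case where a zero polynomial arises in Proposition~\ref{prop:real-stable-squarefree} is covered by the usual convention that $0$ counts as stable.
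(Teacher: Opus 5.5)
Your proposal is correct and follows the same route as the paper. You use Grace--Walsh--Szeg\H{o} to get real stability of the lifts, Proposition~\ref{prop:real-stable-squarefree} for the reduced product, and $\Sym$ as diagonal restriction to return to real-rootedness. The only difference is that you spell out the details the paper's two-line proof leaves implicit: convexity of the upper half-plane handles the case $\deg p<n$, and the zero polynomial is covered by convention.
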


\begin{proof}
    From the Grace--Walsh--Szeg\H{o} theorem, we know that $p\in \mathbb R_n[t]$ is real rooted if and only if the multiaffine lift is real stable. The result follows by Propositions \ref{prop:sym-recovery} and \ref{prop:real-stable-squarefree}.
\end{proof}

\section{Cumulants and Linearization}
\label{sec:cumulants}

Every element of $\mathcal{A}_n$ with vanishing constant term is $n$-step nilpotent since a product of $n+1$ monomials of positive degree must repeat a variable.  For $f\in\mathcal{A}_n$ with $f_\varnothing=1$, define
\begin{equation}\label{eq:nilpotent-log}
  \log f := \sum_{k=1}^{n}
  \frac{(-1)^{k+1}}{k}\,(f-1)^k
  = \sum_{\varnothing\neq S\subseteq[n]}
  \kappa_S(f)\, t^S.
\end{equation}

\begin{defn}
    We refer to the coefficients $\kappa_S(f)$ as the \emph{squarefree cumulants} of $f$.
\end{defn}

Thus, if $f,g\in\mathcal{A}_n$ satisfy $f_\varnothing = g_\varnothing = 1$, then
\[
  \log(f\boxplus g) = \log f + \log g.
\]
Equivalently,
\[
  \kappa_S(f\boxplus g) = \kappa_S(f)+\kappa_S(g)
\]
for every nonempty $S\subseteq[n]$. Indeed, by Lemma~\ref{lem:boxplus-is-product}, $f\boxplus g = fg$.  Since the logarithm of any element $a$ with $a_\varnothing =0$ is defined by a terminating power series, the usual identity $\log(fg)=\log f+\log g$ holds.

\begin{prop} \label{prop:log-cumulant-formula}
Let
\[
  f(t)=\sum_{S\subseteq[n]} f_S\, t^S
  \in \mathcal{A}_n
\]
with $f_\varnothing=1$, and write
\[
  \log f
  =
  \sum_{\varnothing\neq S\subseteq[n]}
  \kappa_S(f)\, t^S.
\]
Then
\begin{equation}
      f_S = \sum_{\pi\in P(S)} \kappa_\pi(f),
\end{equation}
where
\[
  \kappa_\pi(f)
  :=
  \prod_{B\in\pi}\kappa_B(f),
\]
and $P(S)$ denotes the lattice of set partitions of~$S$.
Equivalently,
\begin{equation}
    \kappa_S(f)=
    \sum_{\pi\in P(S)} (-1)^{|\pi|-1}(|\pi|-1)! \prod_{B\in\pi} f_B.
\end{equation}
\end{prop}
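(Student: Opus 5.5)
Our plan is to use the nilpotent exponential, which inverts the nilpotent logarithm, and then read off coefficients with the squarefree multiplication rule \eqref{eq:mult-rule}. Set $L := \log f = \sum_{S\neq\varnothing}\kappa_S(f)\,t^S$. Then $L$ has zero constant term, so it is nilpotent with $L^{n+1}=0$, and
\[
  \exp L := \sum_{k=0}^{n} \frac{L^k}{k!}
\]
is a finite sum. We will first verify that $\exp(\log f)=f$. Both sides lie in the commutative algebra $\mathcal A_n$, and $f-1$ is nilpotent. The identity $\exp(\log(1+x))=1+x$ holds in the formal power series ring $\mathbb Q[[x]]$. Substituting $x=f-1$ through the ring homomorphism $\mathbb Q[[x]]/(x^{n+1})\to\mathcal A_n$ gives the claim. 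The truncation is harmless because all terms of degree greater than $n$ in $x$ vanish, and composing series without constant term is compatible with the truncation.

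Next we compute the coefficient of $t^S$ in $L^k/k!$. Expanding $L^k$ gives a sum over ordered $k$-tuples $(B_1,\dots,B_k)$ of nonempty subsets, with monomial $t^{B_1}\cdots t^{B_k}$. By \eqref{eq:mult-rule} this monomial equals $t^S$ exactly when the $B_i$ are pairwise disjoint with union $S$, and it is zero otherwise. Each unordered partition $\pi\in P(S)$ with $|\pi|=k$ arises from exactly $k!$ ordered tuples, since the blocks are distinct nonempty sets. Therefore
\[
  [t^S]\,\frac{L^k}{k!}=\sum_{\pi\in P(S),\,|\pi|=k}\kappa_\pi(f).
\]
Summing over $k$ yields $f_S=\sum_{\pi\in P(S)}\kappa_\pi(f)$ for nonempty $S$.

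For the inverse formula, we expand $\log f=\sum_k \frac{(-1)^{k+1}}{k}(f-1)^k$ in the same way. The function $f-1$ has coefficients $f_B$ for $B\neq\varnothing$. The coefficient of $t^S$ in $(f-1)^k$ is a sum over ordered $k$-tuples of disjoint nonempty blocks covering $S$, which equals $k!\sum_{|\pi|=k}\prod_{B\in\pi}f_B$. Multiplying by $(-1)^{k+1}/k$ gives the weight $(-1)^{k-1}(k-1)!$. Alternatively, this is just Möbius inversion on the partition lattice applied to the first identity.

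No step is genuinely difficult. The only points needing care are the legitimacy of $\exp\circ\log=\mathrm{id}$ in the nilpotent setting, and the exact count of $k!$ orderings, which depends on blocks being nonempty and hence distinct.
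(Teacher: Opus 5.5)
Your proposal is correct and follows the paper's proof: you write $f=\exp(\log f)$, expand $(\log f)^k/k!$ with the squarefree rule so that only ordered disjoint decompositions of $S$ survive, and absorb the $k!$ orderings of each partition. The only differences are minor. You get the inverse formula directly by expanding the defining series $\sum_k \frac{(-1)^{k+1}}{k}(f-1)^k$ with the same counting, where the paper cites M\"obius inversion on the partition lattice. You also justify $\exp\circ\log=\mathrm{id}$ in the nilpotent setting, which the paper takes for granted.
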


\begin{proof}
Since $f=\exp(\log f)$ in the nilpotent algebra
$\mathcal{A}_n$,
\[
  f
  =
  \sum_{k=0}^n \frac{1}{k!}(\log f)^k.
\]
Fix $S\subseteq[n]$.  Expanding $(\log f)^k$, the coefficient
of $t^S$ is obtained by choosing an ordered decomposition
\[
  S=B_1\sqcup\cdots\sqcup B_k
\]
into nonempty disjoint subsets.  Since multiplication in $\mathcal{A}_n$ is squarefree, only disjoint products survive.
Grouping ordered decompositions according to the underlying
partition $\pi=\{B_1,\dots,B_m\}$ gives
\[
  f_S
  =
  \sum_{\pi\in P(S)}
  \prod_{B\in\pi}\kappa_B(f)
  =
  \sum_{\pi\in P(S)}
  \kappa_\pi(f).
\]

The inverse formula follows from M\"obius inversion on the
partition lattice: see \cite[Section 3.10]{Sta12}.
\end{proof}

The following is apparent from the natural action of the full permutation group acting on the generators $t_1,\dotsc,t_n$ of $\mathcal A_n$.

\begin{lem}\label{lem:symmetric-cumulants}
Let $p\in\R_n[t]$, and let $\widetilde p\in\mathcal{A}_n$ be its multiaffine lift.  Then $\kappa_S(\widetilde p)$ depends only on $|S|$.
\end{lem}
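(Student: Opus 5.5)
The plan is to use the action of the symmetric group $\mathfrak S_n$ on $\mathcal{A}_n$ by permuting the generators, $\sigma\cdot t_i := t_{\sigma(i)}$. This action respects the defining ideal $(t_1^2,\dots,t_n^2)$, so it acts on $\mathcal{A}_n$ by unital algebra automorphisms. On the monomial basis it acts by $\sigma\cdot t^S = t^{\sigma(S)}$. The proof then has three steps: check that the lift is fixed by this action, check that the logarithm commutes with it, and read off the coefficients.

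First, $\widetilde p$ is fixed by every $\sigma$. By Definition~\ref{defn:multiaffine-lift}, $\widetilde p_S = \beta_n(p)_{|S|}$, and $|\sigma(S)| = |S|$. Hence
\[
\sigma\cdot\widetilde p=\sum_S \beta_n(p)_{|S|}\,t^{\sigma(S)}=\widetilde p .
\]
We will tacitly assume the normalization $\widetilde p_\varnothing = \beta_n(p)_0 = 1$, which is what is needed for $\log\widetilde p$ in \eqref{eq:nilpotent-log} to be defined. If instead $p(0)\neq 1$, one rescales $p$ first; this does not affect the symmetry argument.

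Second, the logarithm is equivariant under this action. The element $\log f$ is a finite polynomial expression in $f-1$, namely $\sum_{k=1}^n \frac{(-1)^{k+1}}{k}(f-1)^k$. Each $\sigma$ acts as an algebra automorphism fixing $1$, so it commutes with every such polynomial expression. Therefore
\[
\sigma\cdot\log f=\log(\sigma\cdot f).
\]
Applying this to $f=\widetilde p$ gives $\sigma\cdot\log\widetilde p=\log\widetilde p$.

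Third, compare coefficients. Write $\log\widetilde p = \sum_S \kappa_S(\widetilde p)\,t^S$. Then
\[
\sigma\cdot\log\widetilde p=\sum_S \kappa_S(\widetilde p)\,t^{\sigma(S)},
\]
and comparing the coefficient of $t^{\sigma(S)}$ on both sides yields $\kappa_{\sigma(S)}(\widetilde p)=\kappa_S(\widetilde p)$ for all $\sigma$ and $S$. Since $\mathfrak S_n$ acts transitively on the $k$-element subsets of $[n]$, $\kappa_S(\widetilde p)$ depends only on $|S|$.

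As a cross-check, one can instead use the explicit formula in Proposition~\ref{prop:log-cumulant-formula}. Each summand $\prod_{B\in\pi} f_B$ depends only on the block sizes of $\pi$. A bijection $S\to S'$ with $|S|=|S'|$ induces a bijection $P(S)\to P(S')$ that preserves block sizes, so the two sums agree term by term. No step here is a real obstacle. The only points needing care are that the permutation action descends to the quotient algebra and that it commutes with the truncated logarithm, and both follow immediately from the action being by algebra automorphisms.
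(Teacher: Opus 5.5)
Your proposal is correct and takes the same route as the paper, which justifies the lemma in one line by appeal to the action of the full permutation group on the generators $t_1,\dots,t_n$ of $\mathcal{A}_n$. You supply the details the paper leaves implicit: invariance of $\widetilde p$, equivariance of the nilpotent logarithm under algebra automorphisms, transitivity on $k$-subsets, and the tacit normalization $\widetilde p_\varnothing=1$.
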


Accordingly, for the multiaffine lift we write
\[
  \kappa_k(p):=\kappa_S(\widetilde p),
  \qquad |S|=k.
\]

\begin{remark}\label{rem:AP-comparison}
Both Proposition~\ref{prop:log-cumulant-formula} and the construction of Arizmendi and Perales~\cite{AP18} are instances of the classical exponential formula \cite[equation (2.7)]{AP18}: a sequence $(c_k)$ and its classical cumulants $(b_k)$ satisfy
\[
  c_k=\sum_{\pi\in P([k])}\ \prod_{B\in\pi} b_{|B|}.
\]
The symmetric form of Proposition~\ref{prop:log-cumulant-formula} is this relation with $c_k=\beta_n(p)_k$ and $b_k=\kappa_k(p)$. The same exponential formula governs the finite free cumulants $\kappa^{\mathrm{AP}}_k$, applied to the normalized coefficient sequence
\[
  \widetilde a_k:=\frac{(-n)^{k}a_k}{(n)_k},
  \qquad (n)_k:=n(n-1)\dotsb(n-k+1),
\]
where $a_k$ are the coefficients of $p^\star$. This is exactly the sequence inside the logarithm of the finite $R$-transform \cite[equation (3.1)]{AP18}, with the partition--exponential extraction carried out in the proof of \cite[Proposition 3.4]{AP18}. The two coefficient systems differ only by the reversal and rescaling relating our normalization to that of \cite{AP18, MSS22}, so the squarefree cumulants and the finite free cumulants determine one another by an explicit rescaling. In this sense the finite free cumulants are, after rescaling, the ordinary logarithm in the squarefree algebra.
\end{remark}

\section{Multilinear log-concavity and closure under
\texorpdfstring{$\boxplus$}{additive convolution}}
\label{sec:logconcave}

The cumulant linearization of the preceding section suggests a
natural class defined by sign conditions on cumulants.

\begin{defn} \label{def:LC}
A polynomial $f\in\mathcal{A}_n$ with $f_S\ge 0$ for all $S$ and $f_\varnothing = 1$ is \emph{multilinearly log-concave}, written $f\in\mathcal{LC}_n$, if
\begin{equation}\label{eq:LC}
  \kappa_S(f)\le 0
  \qquad\text{for all } S\subseteq[n],\ |S|\ge 2.
\end{equation}
\end{defn}

\begin{remark}\label{rem:LC-interpretation}
For $|S|=2$, the condition $\kappa_{\{i,j\}}(f)\le 0$ is
equivalent to
\[
  f_{\{i,j\}} \le f_{\{i\}}\, f_{\{j\}},
\]
the usual pairwise negative correlation. Likewise,
$\kappa_S\le 0$ says that all higher-level correlations are similarly
repulsive.
\end{remark}

\bigskip

It is readily apparent that $\mathcal{LC}_n$ is closed under additive convolution, that is, if $f,g\in\mathcal{LC}_n$, then $f\boxplus g\in\mathcal{LC}_n$. Indeed, let $h = f\boxplus g$. Nonnegativity of $h_S$ follows from $f_A,g_B\ge 0$, and $h_\varnothing = f_\varnothing\, g_\varnothing = 1$.  For $|S|\ge 2$, cumulant additivity gives
\[
  \kappa_S(h) = \kappa_S(f) + \kappa_S(g) \le 0,
\]
since both terms are nonpositive by hypothesis.

\begin{remark}
    The simplicity of the present argument reflects the design of $\mathcal{LC}_n$: it is a large natural class for which cumulant additivity alone implies closure. On the other hand, the strongly Rayleigh property~\cite{BBL09} and the Lorentzian condition~\cite{BH20} are defined by nonlinear constraints, which reflect more subtle, ``global'' curvature properties of the polynomial. The precise containment relations between these classes remain to be understood.
\end{remark}

That said, the class $\mathcal{LC}_n$ enjoys several of the same formal permanence properties that appear in the theory of strongly Rayleigh measures and negative dependence. In fact, the class $\mathcal{LC}_n$ is closed under additive convolution, positive external fields, and marginalization, which are among the central permanence properties appearing in the theory of negatively dependent measures, \cite{Pem00,BBL09}.  The point here is not that $\mathcal{LC}_n$ coincides with the strongly Rayleigh class, but rather that the logarithmic coordinates naturally inherit many of the same structural symmetries.

\begin{prop}[External Fields/Projection]
\label{prop:LC-external-fields}
Let
\[
  f(t)=\sum_{S\subseteq[n]} f_S\, t^S
  \in \mathcal{LC}_n,
\]
and let $\lambda_1,\dots,\lambda_n \ge 0$.
Define
\[
  f^\lambda(t)
  :=
  f(\lambda_1 t_1,\dots,\lambda_n t_n).
\]
Then $f^\lambda\in\mathcal{LC}_n$.
\end{prop}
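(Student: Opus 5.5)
The plan is to show that rescaling the variables acts diagonally on the squarefree algebra and commutes with the nilpotent logarithm, so each cumulant is multiplied by a nonnegative scalar. Define the linear map $D_\lambda:\mathcal{A}_n\to\mathcal{A}_n$ by $t^S\mapsto \lambda^S t^S$, where $\lambda^S:=\prod_{i\in S}\lambda_i$. Then $f^\lambda = D_\lambda f$, so $(f^\lambda)_S=\lambda^S f_S$.

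First, $D_\lambda$ is an algebra endomorphism of $\mathcal{A}_n$. It is the substitution $t_i\mapsto\lambda_i t_i$, which preserves the ideal $(t_1^2,\dots,t_n^2)$. One can also check this directly from the multiplication rule~\eqref{eq:mult-rule}: if $A\cap B=\varnothing$, then $\lambda^A\lambda^B=\lambda^{A\cup B}$, and both sides vanish otherwise. Since $D_\lambda(1)=1$, we have $D_\lambda(f-1)=f^\lambda-1$, and hence $D_\lambda((f-1)^k)=(f^\lambda-1)^k$ for every $k$. Applying $D_\lambda$ termwise to the terminating series~\eqref{eq:nilpotent-log} gives $\log f^\lambda = D_\lambda(\log f)$. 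Consequently
\[
  \kappa_S(f^\lambda)=\lambda^S\,\kappa_S(f)\qquad\text{for all nonempty } S\subseteq[n].
\]
The same conclusion can be read off from Proposition~\ref{prop:log-cumulant-formula}: every product $\prod_{B\in\pi} f_B$ with $\pi\in P(S)$ picks up exactly the factor $\prod_{B\in\pi}\lambda^B=\lambda^S$.

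The remaining conditions in Definition~\ref{def:LC} are then immediate. We have $(f^\lambda)_\varnothing=f_\varnothing=1$, and $(f^\lambda)_S=\lambda^S f_S\ge0$ because $\lambda_i\ge0$. For $|S|\ge2$, $\kappa_S(f^\lambda)=\lambda^S\kappa_S(f)\le0$, since $\lambda^S\ge 0$ and $\kappa_S(f)\le 0$.

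There is essentially no obstacle. The only point needing care is the case where some $\lambda_i=0$. This is harmless because $D_\lambda$ is still a unital algebra homomorphism, and the logarithm is a finite polynomial expression, so no invertibility is required.
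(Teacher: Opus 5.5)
Your proof is correct and essentially the paper's argument: both reduce everything to the identity $\kappa_S(f^\lambda)=\lambda^S\kappa_S(f)$. The paper reads this off the M\"obius-inversion formula of Proposition~\ref{prop:log-cumulant-formula}, which is your alternate remark; your main derivation, that $t_i\mapsto\lambda_i t_i$ is a unital algebra endomorphism commuting with the nilpotent logarithm, is an equally valid and slightly more conceptual route, and you also explicitly check the normalization and nonnegativity conditions that the paper leaves implicit.
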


\begin{proof}
By Proposition \ref{prop:log-cumulant-formula}, specifically, the M\"obius inversion formula, we have that $\kappa_S$ is a sum of terms of the form $\prod_{B\in \pi} f_B$, where $\pi$ is a partition of $S$.
Applying the map
\[
  t_i\mapsto \lambda_i t_i,
\]
it follows that
\[
  \kappa_S(f^\lambda)
  =
  \lambda^S \kappa_S(f),
  \qquad
  \lambda^S:=\prod_{i\in S}\lambda_i.
\]
Since $\lambda^S \ge 0$, the inequalities $\kappa_S(f)\le 0$ for $|S|\ge 2$ are preserved.
\end{proof}

\begin{prop}[Marginalization]
\label{prop:LC-marginalization}
Let $f\in\mathcal{LC}_n$.  Then the marginal
\[
  g(t_1,\dots,t_{n-1})
  :=
  \frac{f(t_1,\dots,t_{n-1},1)}{1 + f_{\{n\}}}
\]
belongs to $\mathcal{LC}_{n-1}$.
\end{prop}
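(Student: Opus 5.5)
The plan is to split off the variable $t_n$ and compute $\log g$ directly in the nilpotent algebra, rather than working with the partition formula of Proposition~\ref{prop:log-cumulant-formula}. Write $f = f_0 + t_n f_1$ with $f_0,f_1\in\mathcal{A}_{n-1}$, so that $f_0 = f(t_1,\dots,t_{n-1},0)$ and $(f_1)_\varnothing = f_{\{n\}}$. The easy conditions come first. Since $f_{\{n\}}\ge 0$, the normalizer $1+f_{\{n\}}$ is positive. The coefficients $g_S = (f_S+f_{S\cup\{n\}})/(1+f_{\{n\}})$ are then nonnegative, and $g_\varnothing = 1$. It remains to prove $\kappa_S(g)\le 0$ for $S\subseteq[n-1]$ with $|S|\ge 2$.

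The key step is to express the cumulants of $f$ through $f_0$ and $f_1$. Since $f_0$ has constant term $1$, it is invertible in $\mathcal{A}_{n-1}$. Write $f = f_0(1 + t_n h)$ with $h := f_1/f_0\in\mathcal{A}_{n-1}$. Because $t_n^2=0$, the nilpotent logarithm gives
\[
  \log f = \log f_0 + \log(1+t_n h) = \log f_0 + t_n h .
\]
Comparing coefficients, $\kappa_S(f)=\kappa_S(f_0)$ and $\kappa_{S\cup\{n\}}(f) = h_S$ for every $S\subseteq[n-1]$. In particular $h_\varnothing = f_{\{n\}}$, and the hypothesis $f\in\mathcal{LC}_n$ gives $h_S\le 0$ for all nonempty $S\subseteq[n-1]$, because then $|S\cup\{n\}|\ge 2$.

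Next I would rewrite the marginal using this factorization:
\[
  g = \frac{f_0+f_1}{1+f_{\{n\}}} = f_0\cdot\frac{1+h}{1+f_{\{n\}}} = f_0\,(1+u),
  \qquad u := \frac{h-f_{\{n\}}}{1+f_{\{n\}}}.
\]
Here $u$ has zero constant term, and all of its coefficients are $\le 0$. Hence $\log g = \log f_0 + \log(1+u)$, and therefore
\[
  \kappa_S(g) = \kappa_S(f) + [\log(1+u)]_S .
\]
The first term is $\le 0$ for $|S|\ge 2$ by hypothesis.

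The step needing care is the sign of $\log(1+u)=\sum_{k\ge1}\frac{(-1)^{k+1}}{k}u^k$. Write $u=-v$, where $v$ has nonnegative coefficients. Products in $\mathcal{A}_{n-1}$ of elements with nonnegative coefficients again have nonnegative coefficients, since the multiplication rule~\eqref{eq:mult-rule} only produces sums of products. So $v^k\ge 0$ coefficientwise, and each term equals $\frac{(-1)^{k+1}(-1)^k}{k}v^k = -\frac{1}{k}v^k$. Thus $\log(1+u) = -\sum_k v^k/k$ has all coefficients nonpositive. This gives $\kappa_S(g)\le 0$ for all $|S|\ge 2$, and in fact it adds nonpositive corrections even at $|S|=1$. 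This completes the argument that $g\in\mathcal{LC}_{n-1}$.
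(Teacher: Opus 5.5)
Your proof is correct and follows essentially the same route as the paper: your $\log f_0$ and $h = f_1/f_0$ are exactly the paper's $h_0$ and $h_1$ in $\log f = h_0 + t_n h_1$, and your $u$ is the paper's $-h_1'/(1+a)$, with the sign of $\log(1+u)$ settled by the same series argument. The only cosmetic difference is that you normalize by $1+f_{\{n\}}$ before taking the logarithm, which avoids the paper's informal use of $\log(1+h_1)$ for an element whose constant term is $1+a$ rather than $1$.
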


\begin{proof}
    Write
\[
  \log f = h_0+t_n h_1
\]
with \(h_0,h_1\in \mathcal A_{n-1}\). We have that
\[
    f = \exp(\log f) = \exp(h_0)\exp(t_nh_1) = \exp(h_0)(1 + t_n h_1)
\]
since $\mathcal A_n$ is commutative and $t_n^2=0$. Hence
\[
    \log g = - \log(1+f_{\{n\}}) + h_0 + \log(1 + h_1).
\]
Setting $\lambda_1,\dotsc,\lambda_{n-1}=1$ and $\lambda_n=0$, we have $\log f^\lambda = h_0$, so by Proposition \ref{prop:LC-external-fields}, we have $(h_0)_S\le 0$ for $|S| \ge 2$.

By the formula for the logarithm, the constant term of \(h_1\) is \(a := f_{\{n\}} = \kappa_{\{n\}}(f)\). Since \(f\in\mathcal{LC}_n\), we have $a\ge 0$ while all nonconstant coefficients of \(h_1\) are nonpositive. Thus
\[
  h_1 = a - h_1'
\]
where \(a\ge0\) and \(h_1'\in\mathcal A_{n-1}\) has nonnegative coefficients and vanishing constant term. Hence
\[
  \log(1+h_1)
  =
  \log(1+a) + \log\left(1-\frac{h_1'}{1+a}\right)
  =
  \log(1+a) - \sum_{m\ge1}\frac{1}{m}\left(\frac{h_1'}{1+a}\right)^m,
\]
Since \(h_1'\) has nonnegative coefficients, all nonconstant coefficients of the final sum are nonpositive.
Combining this with the nonpositivity of the higher-order
coefficients of \(h_0\), we conclude that every coefficient of
\(\log g = h_0 + \log\left(1 - \frac{h_1'}{1+a}\right)\) of degree at least two is nonpositive.  Since \(g\)
has nonnegative coefficients and \(g_\varnothing=1\), it follows
that \(g\in\mathcal{LC}_{n-1}\).
\end{proof}

\begin{prop}[Conditioning]
\label{prop:LC-conditioning}
Let \(f\in\mathcal{LC}_n\), and suppose \(f_{\{i\}}>0\).
Define the conditioned polynomial
\[
  f^{(i)}(t)
  :=
  \frac{1}{f_{\{i\}}}\,\frac{\partial f}{\partial t_i},
\]
viewed as an element of the squarefree algebra on the variables \(\{t_j:j\neq i\}\).  Then \(f^{(i)}\in\mathcal{LC}_{n-1}\).
\end{prop}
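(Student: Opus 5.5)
We reuse the decomposition from the proof of Proposition~\ref{prop:LC-marginalization}. After relabeling, take $i=n$ and write $\log f = h_0 + t_n h_1$ with $h_0,h_1\in\mathcal A_{n-1}$. As computed there,
\[
  f=\exp(h_0)\,(1+t_n h_1),
\]
so the coefficient of $t_n$ in $f$ is
\[
  \frac{\partial f}{\partial t_n}=\exp(h_0)\,h_1 .
\]
The constant term of $h_1$ is $a=f_{\{n\}}=\kappa_{\{n\}}(f)>0$. Dividing by $a$ gives
\[
  f^{(n)}=\exp(h_0)\cdot\frac{h_1}{a}.
\]
Since $h_1/a$ has constant term $1$, its nilpotent logarithm is defined, and
\[
  \log f^{(n)} = h_0 + \log\!\left(\frac{h_1}{a}\right).
\]
This uses $\log(\exp(h_0))=h_0$ and additivity of $\log$ in the commutative algebra $\mathcal A_{n-1}$, exactly as in Section~\ref{sec:cumulants}.

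We then check the sign of each summand in degrees at least two.

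\emph{The $h_0$ term.} Apply Proposition~\ref{prop:LC-external-fields} with $\lambda_n=0$ and all other $\lambda_j=1$. This gives $(h_0)_S=\kappa_S(f)\le 0$ for every $S\subseteq[n-1]$ with $|S|\ge 2$.

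\emph{The $h_1$ term.} The coefficients of $h_1$ are $(h_1)_T=\kappa_{T\cup\{n\}}(f)$. For $T\neq\varnothing$ these are nonpositive, since $|T\cup\{n\}|\ge 2$. Write $h_1=a-h_1'$, where $h_1'$ has nonnegative coefficients and zero constant term. Then
\[
  \log\!\left(\frac{h_1}{a}\right)=\log\!\left(1-\frac{h_1'}{a}\right)=-\sum_{m\ge 1}\frac1m\left(\frac{h_1'}{a}\right)^m .
\]
Every power $(h_1'/a)^m$ has nonnegative coefficients, so this series has all nonconstant coefficients nonpositive, in every degree.

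Adding the two contributions, every coefficient of $\log f^{(n)}$ of degree at least two is nonpositive. Note that the $h_0$ term may have positive degree-one coefficients; these are harmless, because Definition~\ref{def:LC} restricts only $|S|\ge 2$.

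It remains to check the other requirements of Definition~\ref{def:LC}. The constant term is $f^{(n)}_\varnothing=f_{\{n\}}/f_{\{n\}}=1$. Nonnegativity holds because $f^{(n)}_T=f_{T\cup\{n\}}/f_{\{n\}}\ge 0$. Hence $f^{(n)}\in\mathcal{LC}_{n-1}$.

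The one step requiring care is the identification of $\partial f/\partial t_n$ with $\exp(h_0)h_1$ inside $\mathcal A_{n-1}$. Because $f$ is multiaffine in $t_n$, the partial derivative is just the $t_n$-coefficient, so this follows directly from the factorization $f=\exp(h_0)(1+t_nh_1)$. The remaining steps are routine.
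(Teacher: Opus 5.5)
Your proof is correct and follows the paper's argument: you use the same factorization $f=\exp(h_0)(1+t_nh_1)$, the identity $\log f^{(n)}=h_0+\log(1-h_1'/a)$, and sign control of each summand, and you spell out the series step that the paper defers to the marginalization proof. The detour through Proposition~\ref{prop:LC-external-fields} for the $h_0$ term is harmless but unnecessary, since $(h_0)_S=\kappa_S(f)$ holds directly for $S\subseteq[n-1]$.
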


\begin{proof}
We may assume \(i=n\).  Write
\[
  \log f = h_0+t_n h_1,
\]
where \(h_0,h_1\in\mathcal A_{n-1}\).
Since \(f\in\mathcal{LC}_n\), the constant term of \(h_1\) is
\[
  a=\kappa_{\{n\}}(f)=f_{\{n\}}>0,
\]
while every nonconstant coefficient of \(h_1\) is nonpositive.
Thus
\[
  h_1=a - h_1',
\]
where \(h_1'\) has nonnegative coefficients and vanishing constant term.

Writing
\[
  f=\exp(h_0)(1+t_n h_1),
\]
we have
\[
  f^{(n)} =\frac{1}{f_{\{n\}}}\frac{\partial f}{\partial t_n}
  =
  \exp(h_0)\frac{h_1}{a}.
\]
Therefore
\[
  \log f^{(n)}
  =
  h_0+\log\left(\frac{h_1}{a}\right)
  =
  h_0+\log\left(1-\frac{h_1'}{a}\right).
\]
The proof now follows by the same reasoning as in the proof of Proposition \ref{prop:LC-marginalization}.
\end{proof}

\begin{prop}[Log--Submodularity]
\label{prop:LC-log-submodular}
Let \(f\in\mathcal{LC}_n\).  Then its coefficient function
\[
  S\longmapsto f_S
\]
is log-submodular on the Boolean lattice.  That is, for all
\(A,B\subseteq[n]\),
\[
  f_{A\cup B}\, f_{A\cap B} \le f_A\, f_B.
\]
\end{prop}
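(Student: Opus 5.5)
The plan is to reduce log-submodularity to one elementary inequality, namely that conditioning on a single variable can only decrease coefficients, and then to iterate it using Proposition~\ref{prop:LC-conditioning}.

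\emph{Step 1 (one-point inequality).} Let $f\in\mathcal{LC}_n$, fix $i$, and write $\log f=h_0+t_ih_1$ as in the proofs of Propositions~\ref{prop:LC-marginalization} and~\ref{prop:LC-conditioning}. Then $f=\exp(h_0)(1+t_ih_1)$. Moreover $\exp(h_0)=f|_{t_i=0}$, so its coefficients are $f_U$ for $U\not\ni i$, and these are nonnegative. As shown there, $h_1=a-h_1'$ with $a=f_{\{i\}}\ge 0$ and $h_1'$ having nonnegative coefficients. For $S\not\ni i$, extracting the coefficient of $t_it^S$ gives
\[
  f_{S\cup\{i\}}=a\,f_S-\sum_{U\subsetneq S} f_U\,(h_1')_{S\setminus U}\le f_{\{i\}}\,f_S .
\]
Two consequences follow. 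First, the support of $f$ is down-closed: if $f_{S\cup\{i\}}>0$ then $f_S>0$ and $f_{\{i\}}>0$. Second, whenever $f_{\{i\}}>0$, the conditioned polynomial $f^{(i)}$ satisfies $f^{(i)}_S=f_{S\cup\{i\}}/f_{\{i\}}\le f_S$ for all $S\not\ni i$.

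\emph{Step 2 (disjoint sets).} Next I would show that $f_{X\sqcup Y}\le f_Xf_Y$ for disjoint $X,Y$ and all $f\in\mathcal{LC}_n$. If $f_X=0$, then down-closedness gives $f_{X\sqcup Y}=0$ and there is nothing to prove. Otherwise, enumerate $X=\{x_1,\dots,x_r\}$ and condition successively on $x_1,\dots,x_r$. At each stage the coefficient of the next variable in the current conditioned polynomial is $f_{\{x_1,\dots,x_k\}}/f_{\{x_1,\dots,x_{k-1}\}}$. This is positive by down-closedness, so Proposition~\ref{prop:LC-conditioning} applies and each intermediate polynomial stays in $\mathcal{LC}$. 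The result is $g\in\mathcal{LC}$ with $g_Y=f_{X\cup Y}/f_X$. Since Step~1 holds for every member of $\mathcal{LC}$, each conditioning step does not increase the coefficient indexed by $Y$. Chaining these inequalities gives $g_Y\le f_Y$, that is, $f_{X\sqcup Y}\le f_Xf_Y$.

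\emph{Step 3 (general $A,B$).} Put $C=A\cap B$. If $f_C=0$, then $f_{A\cup B}=0$ by down-closedness and the inequality is trivial. Otherwise, condition on the elements of $C$ exactly as in Step~2 to obtain $g\in\mathcal{LC}$ with $g_U=f_{C\cup U}/f_C$ for $U\cap C=\varnothing$. Applying Step~2 to $g$ with $X=A\setminus C$ and $Y=B\setminus C$ gives $f_{A\cup B}/f_C\le (f_A/f_C)(f_B/f_C)$. Multiplying through by $f_C^2$ yields $f_{A\cup B}f_{A\cap B}\le f_Af_B$.

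The main obstacle is Step~1: one must see that the nonpositivity of the higher cumulants translates into the coefficient inequality $f_{S\cup\{i\}}\le f_{\{i\}}f_S$. This rests on the factorization $f=\exp(h_0)(1+t_ih_1)$ together with the sign structure of $h_1$. A secondary point is the degenerate case of zero coefficients, which is handled by the down-closedness obtained in the same step.
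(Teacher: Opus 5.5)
Your proof is correct, and it takes a genuinely different route from the paper's.

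The paper's argument runs as follows:
\begin{itemize}
\item It reduces to the local inequalities $f_{S\cup\{i,j\}}f_S\le f_{S\cup\{i\}}f_{S\cup\{j\}}$.
\item It obtains each of these from the pairwise condition $\kappa_{\{i,j\}}(f^{(S)})\le 0$, applied to the polynomial conditioned on all of $S$.
\item It then invokes the principle that local log-submodularity generates global log-submodularity.
\end{itemize}

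Your argument is organized differently:
\begin{itemize}
\item From the factorization $f=\exp(h_0)(1+t_ih_1)$ you extract the one-point bound $f_{S\cup\{i\}}\le f_{\{i\}}f_S$. This is already an instance of the global inequality, with $A=\{i\}$ and $B=S$.
\item You propagate it by single-variable conditioning, first to disjoint $X,Y$ and then to general $A,B$ by conditioning on $A\cap B$.
\item No local-to-global lemma is needed.
\end{itemize}

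The paper's route is shorter and uses only pair cumulants of conditioned polynomials. Yours buys a complete treatment of vanishing coefficients, which the paper passes over in two places.
\begin{itemize}
\item The local-to-global principle is automatic only for strictly positive set functions. For example, $1+c\,t_1t_2t_3$ with $c>0$ (which is not in $\mathcal{LC}_3$) satisfies every local inequality but violates $f_{\{1,2,3\}}f_\varnothing\le f_{\{1,2\}}f_{\{3\}}$.
\item Conditioning on a whole set $S$ by iterating Proposition~\ref{prop:LC-conditioning} requires the intermediate coefficients $f_{\{s_1,\dots,s_k\}}$ to be positive.
\end{itemize}
Both gaps are closed by the down-closedness of the support that your Step~1 establishes. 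So your argument also supplies what the paper leaves implicit.

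One minor point: in Step~3 the case $f_{A\cap B}=0$ needs no down-closedness, since the left-hand side vanishes outright. Down-closedness is genuinely needed only in the case $f_X=0$ of Step~2.
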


\begin{proof}
It suffices to prove the local form
\[
  f_{S\cup\{i,j\}}\, f_S
  \le
  f_{S\cup\{i\}}\, f_{S\cup\{j\}},
  \qquad i,j\notin S,\ i\neq j,
\]
since the local inequalities generate log-submodularity on the
Boolean lattice.

We may assume that \(f_S>0\) as the inequality is trivial otherwise.  Conditioning on \(S\) is defined by
\[
  f^{(S)}
  :=
  \frac{1}{f_S}\,\partial_S f,
\]
viewed as a polynomial in the remaining variables. The proof of Proposition~\ref{prop:LC-conditioning} is easily adapted to establish that \(f^{(S)}\in\mathcal{LC}_{n-|S|}\).  The \(\mathcal{LC}\) condition on the pair \(\{i,j\}\) gives
\[
  f^{(S)}_{\{i,j\}}
  \le
  f^{(S)}_{\{i\}}\, f^{(S)}_{\{j\}}.
\]
Since
\[
  f^{(S)}_{\{i,j\}}
  =
  \frac{f_{S\cup\{i,j\}}}{f_S},
  \qquad
  f^{(S)}_{\{i\}}
  =
  \frac{f_{S\cup\{i\}}}{f_S},
  \qquad
  f^{(S)}_{\{j\}}
  =
  \frac{f_{S\cup\{j\}}}{f_S},
\]
hence
\[
  f_{S\cup\{i,j\}}\, f_S
  \le
  f_{S\cup\{i\}}\, f_{S\cup\{j\}}.
\]
Therefore all local log-submodularity inequalities hold, hence \(S\mapsto f_S\) is log-submodular.
\end{proof}

\section{Trace--log expansion and examples}
\label{sec:laplacian}

We now apply the multilinear framework to determinantal examples.  In each case, the trace--log formula in $\mathcal{A}_n$ gives an explicit combinatorial description of the cumulants.

\subsection{The trace--log identity in $\mathcal{A}_n$}

Let $M\in M_n(\R)$ and set
\[
  D(t) := \diag(t_1,\dots,t_n),
  \qquad
  X := D(t)\, M \in M_n(\mathcal{A}_n).
\]
Since $t_i^2 = 0$ in $\mathcal{A}_n$, every entry of $X^m$ involves monomials of degree at least~$m$, so $X^{n+1} = 0$. Thus $X$ is nilpotent and $\log(I+X) = \sum_{m=1}^n \frac{(-1)^{m+1}}{m}\, X^m$ is well-defined.

\begin{prop}[Trace--log identity]
\label{prop:trace-log}
For every $M\in M_n(\R)$,
\[
  \log\det(I + D(t)\, M)
  = \Tr\log(I + D(t)\, M)
  = \sum_{m=1}^n \frac{(-1)^{m+1}}{m}\,
  \Tr\bigl((D(t)\, M)^m\bigr)
\]
in $\mathcal{A}_n$.
\end{prop}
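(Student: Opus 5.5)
Here $\det(I+D(t)M)$ has constant term $1$, so its nilpotent logarithm \eqref{eq:nilpotent-log} is defined. The second equality is just the definition of $\log(I+X)$ followed by linearity of the trace. So the content is the first equality, $\log\det(I+X)=\Tr\log(I+X)$ for the nilpotent matrix $X=D(t)M$ over the commutative ring $\mathcal{A}_n$.

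I would prove this by a one-parameter deformation that reduces it to a polynomial identity. Introduce an auxiliary scalar variable $s$ and put $X_s := sX$ over the commutative ring $\mathcal{A}_n[s]$. Both sides then become polynomials in $s$ with coefficients in $\mathcal{A}_n$, since everything is nilpotent of bounded order. Both vanish at $s=0$. It therefore suffices to compare their formal $s$-derivatives, which is legitimate because $\mathcal{A}_n$ is a $\mathbb{Q}$-algebra and so a polynomial in $s$ is determined by its derivative and its value at $0$.

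For the right side, differentiating the terminating series $\sum_m \frac{(-1)^{m+1}}{m}s^m\Tr(X^m)$ gives
\[
\sum_{m\ge1}(-1)^{m+1}s^{m-1}\Tr(X^m)=\Tr\bigl(X(I+sX)^{-1}\bigr),
\]
where $(I+sX)^{-1}=\sum_k(-sX)^k$ is a finite sum.

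For the left side, write $P(s)=\det(I+sX)$, a unit because its constant term is $1$. The derivative of the nilpotent logarithm satisfies $\frac{d}{ds}\log P = P'P^{-1}$. This is the formal chain rule for the series $\log(1+u)$ with $u$ nilpotent, and it holds because $\frac{d}{ds}\sum\frac{(-1)^{k+1}}{k}u^k=u'\sum(-u)^{k}$ in a commutative ring. Jacobi's formula over a commutative ring gives
\[
P'=\Tr\bigl(\operatorname{adj}(I+sX)\,X\bigr)=P\,\Tr\bigl((I+sX)^{-1}X\bigr),
\]
using $\operatorname{adj}(A)=\det(A)A^{-1}$ for invertible $A$. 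So the two derivatives agree, and hence the two sides agree.

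The main obstacle is making sure every formal manipulation is valid in a ring with nilpotents rather than over a field. In particular, I must justify Jacobi's formula and the logarithmic derivative formally rather than through eigenvalues. Jacobi's formula is a polynomial identity in the matrix entries, so it holds over any commutative ring. If that feels too slick, an alternative is to note that both sides of $\log\det(I+sX)=\Tr\log(I+sX)$ are universal polynomial identities in the entries of $X$ modulo high powers of $s$. Such identities can be checked over $\mathbb{C}$ for diagonalizable $X$, where they reduce to $\log\prod(1+s\lambda_i)=\sum\log(1+s\lambda_i)$ as formal power series, and then specialized to $\mathcal{A}_n$. I would present the derivative argument and mention this as a backup.
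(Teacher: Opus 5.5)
Your argument is correct, but your main route differs from the paper's. The paper's proof is a single sentence: $\log\det(I+X)=\Tr\log(I+X)$ holds for nilpotent $X$ over any commutative $\R$-algebra because ``both sides agree as polynomial identities in the entries.'' That is in effect your backup argument: verify the universal identity generically, e.g.\ on diagonalizable complex matrices via eigenvalues, then specialize. The paper leaves all details of this implicit.

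Your main argument instead stays inside $\mathcal{A}_n[s]$. Both sides vanish at $s=0$. Their $s$-derivatives both equal $\Tr\bigl(X(I+sX)^{-1}\bigr)$, by the logarithmic-derivative rule and Jacobi's formula. A polynomial over a $\mathbb{Q}$-algebra is determined by its derivative and its value at $0$, and setting $s=1$ finishes the proof.

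What your route buys is a self-contained proof in which every step is visibly valid in a ring with nilpotents. Jacobi's formula needs only the Leibniz expansion of the determinant and the product rule, and no eigenvalues, density, or transfer principle appears. The paper's route is shorter and makes the universality of the identity evident. However, it suppresses a point your argument makes explicit: after specialization, the formal series in $s$ terminate, so one may set $s=1$. This holds because the $s^m$-coefficients are homogeneous of degree $m$ in the entries of $X=D(t)M$, those entries lie in the augmentation ideal $\mathfrak{m}$, and $\mathfrak{m}^{n+1}=0$.

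One small precision: $P(s)=\det(I+sX)$ is a unit in $\mathcal{A}_n[s]$ not merely because $P(0)=1$ (for instance $1+s$ is not a unit). It is a unit because every coefficient of $P(s)-1$ lies in $\mathfrak{m}$, so $P(s)-1$ is nilpotent. This is also exactly what makes $\log P(s)$ a terminating series.
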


\begin{proof}
The identity $\log\det(I+X) = \Tr\log(I+X)$ holds for nilpotent matrices over any commutative $\R$-algebra, as both sides agree as polynomial identities in the entries.
\end{proof}

\begin{prop}\label{prop:trace-cycle}
For each $m\ge 1$, the coefficient of $t^S$ (with $|S|=m$) in
$\Tr\bigl((D(t)\, M)^m\bigr)$ is
\[
  \sum_{\substack{(i_1,\dots,i_m)\\
  \{i_1,\dots,i_m\}=S}}
  M_{i_1 i_2}\, M_{i_2 i_3}\cdots M_{i_m i_1},
\]
where the sum is over all sequences $(i_1,\dots,i_m)$ that are
permutations of $S$.
\end{prop}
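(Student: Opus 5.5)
The plan is to expand the trace of the matrix power directly and then apply the squarefree multiplication rule~\eqref{eq:mult-rule}. The entries of $X = D(t)M$ are $X_{ij} = t_i M_{ij}$, since $D(t)$ scales row $i$ by $t_i$. Expanding the trace of the $m$-th power as a sum over closed walks gives
\[
  \Tr\bigl((D(t)M)^m\bigr) = \sum_{i_1,\dots,i_m\in[n]} X_{i_1i_2}X_{i_2i_3}\cdots X_{i_mi_1}
  = \sum_{i_1,\dots,i_m\in[n]} t_{i_1}t_{i_2}\cdots t_{i_m}\, M_{i_1i_2}M_{i_2i_3}\cdots M_{i_mi_1}.
\]
This expansion uses only commutativity of $\mathcal{A}_n$, so that the scalar entries of $M$ and the variables $t_i$ may be freely rearranged.

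The next step is to evaluate each monomial $t_{i_1}\cdots t_{i_m}$ in $\mathcal{A}_n$. By~\eqref{eq:mult-rule}, and since $t_i^2=0$, this monomial vanishes whenever two indices coincide. When the indices are pairwise distinct it equals $t^{\{i_1,\dots,i_m\}}$, which has degree exactly $m$. The surviving terms are therefore indexed by sequences of distinct indices. Collecting them by their underlying set $S=\{i_1,\dots,i_m\}$, with $|S|=m$, shows that the coefficient of $t^S$ is the sum over all orderings $(i_1,\dots,i_m)$ of $S$ of the cyclic product $M_{i_1i_2}\cdots M_{i_mi_1}$. This is exactly the stated formula. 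The same computation also shows that $\Tr\bigl((D(t)M)^m\bigr)$ is homogeneous of degree $m$, so only sets with $|S|=m$ appear.

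I do not expect any step to be a real obstacle. The one point that needs care is bookkeeping: each term carries a factor $t_{i_k}$ for every step of the walk, and the walk closes up at $i_1$. As a result every visited index contributes exactly one variable, with no extra factor of $t_{i_1}$ from the return step. With this observation the reduction to permutations of $S$ is immediate.
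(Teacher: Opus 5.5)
Your proposal is correct and follows essentially the same route as the paper: expand $\Tr\bigl((D(t)M)^m\bigr)$ as a sum over closed index sequences using $X_{ij}=t_iM_{ij}$, then kill every term with a repeated index via $t_i^2=0$. The paper states this in a single line, and your write-up simply supplies the bookkeeping explicitly.
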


\begin{proof}
Expanding the matrix product and taking the trace, terms with
a repeated index vanish in $\mathcal{A}_n$ since $t_i^2=0$.
\end{proof}

Combining these gives the main formula.

\begin{thm}[Cycle expansion for multilinear cumulants]
\label{thm:cycle-expansion}
For $f_M(t) := \det(I+D(t)\, M)\in\mathcal{A}_n$ with
$\log f_M = \sum_{\varnothing\neq S} \kappa_S(M)\, t^S$,
\[
  \kappa_S(M)
  = (-1)^{k+1}
  \sum_{C(S)}
  M_{i_1 i_2}\, M_{i_2 i_3}\cdots M_{i_k i_1},
\]
where $k=|S|$ and $C(S)$ denotes classes of cyclic orderings of $S$ modulo rotation.
\end{thm}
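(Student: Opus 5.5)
The plan is to combine Proposition~\ref{prop:trace-log} with Proposition~\ref{prop:trace-cycle} and then count. First, note that $f_M(t)=\det(I+D(t)M)$ has constant term $f_\varnothing=\det I=1$, so $\log f_M$ is defined by \eqref{eq:nilpotent-log}. Proposition~\ref{prop:trace-log} identifies it with $\sum_{m\ge1}\frac{(-1)^{m+1}}{m}\Tr\bigl((D(t)M)^m\bigr)$; the two logarithms agree because both are given by the same terminating power series in a nilpotent element. To extract the coefficient of $t^S$ with $|S|=k$, observe that every monomial in $\Tr((D(t)M)^m)$ has degree exactly $m$: each factor $D(t)M$ contributes exactly one variable $t_{i_r}$. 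Hence only the $m=k$ term contributes, and
\[
  \kappa_S(M)=\frac{(-1)^{k+1}}{k}\,[t^S]\Tr\bigl((D(t)M)^k\bigr).
\]

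Next, Proposition~\ref{prop:trace-cycle} evaluates this coefficient as the sum over all orderings $(i_1,\dots,i_k)$ of $S$ of the closed-walk product $M_{i_1i_2}\cdots M_{i_ki_1}$. The key combinatorial step is that this product depends only on the cyclic ordering: rotating $(i_1,\dots,i_k)$ to $(i_2,\dots,i_k,i_1)$ permutes the factors cyclically and so leaves the product unchanged. Since the entries of $S$ are distinct, the rotation group $\mathbb Z/k$ acts freely on the set of orderings. Each class in $C(S)$ therefore contains exactly $k$ orderings, and the sum over orderings equals $k$ times the sum over $C(S)$. This factor of $k$ cancels the $1/k$ and gives the stated formula.

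The main point needing care is the degree-homogeneity claim. Expanding $\Tr((D(t)M)^m)=\sum t_{i_1}M_{i_1i_2}t_{i_2}M_{i_2i_3}\cdots t_{i_m}M_{i_mi_1}$ makes it explicit: each term is exactly $t_{i_1}\cdots t_{i_m}$ times scalars, which vanishes in $\mathcal A_n$ unless the indices are distinct. I would also check the case $k=1$, where the formula reads $\kappa_{\{i\}}=M_{ii}$. This is consistent because $f_{\{i\}}=M_{ii}$ and, by Proposition~\ref{prop:log-cumulant-formula}, the first cumulant equals the first coefficient.
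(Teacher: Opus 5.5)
Your proposal is correct and matches the paper's proof: only the $m=|S|$ term of the trace--log expansion contributes to $t^S$, and each cyclic class is counted $k$ times among the orderings of $S$, which cancels the $1/k$. Your checks of degree homogeneity, of the freeness of the rotation action, and of the rotation invariance of the cyclic product fill in details the paper leaves implicit.
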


\begin{proof}
The monomial $t^S$ arises only from the $m=|S|$ term.  Each
cyclic ordering is counted $k$ times by starting point, absorbing
the factor $1/k$.
\end{proof}

\begin{remark}\label{rem:small-cumulants}
For small sets, the formula gives:
\begin{itemize}
\item $|S|=1$:
$\kappa_{\{i\}}(M) = M_{ii}$.

\item $|S|=2$:
$\kappa_{\{i,j\}}(M) = -M_{ij}\, M_{ji}$.

\item $|S|=3$:
$\kappa_{\{i,j,k\}}(M) = M_{ij}\, M_{jk}\, M_{ki}
+ M_{ik}\, M_{kj}\, M_{ji}$.
\end{itemize}
The singleton cumulant records the diagonal entry, the pairwise
cumulant records (minus) the product of the off-diagonal pair,
and the triple cumulant records the two directed triangles
through the three vertices.
\end{remark}

\subsection{Graph Laplacians}

Let $G$ be a finite simple graph on $[n]$ with Laplacian $L_G$,
and set $f_G(t) := \det(I + D(t)\, L_G)$.

\begin{thm}[Hamiltonian-cycle formula]
\label{thm:hamiltonian}
Write $\log f_G = \sum_{\varnothing\neq S} \kappa_S(G)\, t^S$.
Then:
\begin{enumerate}
  \item[\textup{(i)}]
  $\kappa_{\{i\}}(G) = \deg_G(i)$.
  \item[\textup{(ii)}]
  For $|S|\ge 2$,
  \[
    \kappa_S(G) = -\,h(G[S]),
  \]
  where $h(G[S])$ is the number of oriented Hamiltonian cycles
  in the induced subgraph $G[S]$, modulo rotation.
  In particular, for $|S|=2$: $\kappa_{\{i,j\}}(G) = -1$ if
  $ij\in E(G)$, and $0$ otherwise, since a Hamiltonian cycle on
  two vertices is a single directed $2$-cycle.
\end{enumerate}
In particular, $\kappa_S(G)\le 0$ for all $|S|\ge 2$.
\end{thm}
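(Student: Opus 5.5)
We apply Theorem~\ref{thm:cycle-expansion} with $M=L_G=\Delta-A$, where $\Delta$ is the diagonal degree matrix and $A$ the adjacency matrix. Part (i) is then the first item of Remark~\ref{rem:small-cumulants}: $\kappa_{\{i\}}(G)=(L_G)_{ii}=\deg_G(i)$.

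For part (ii), fix $S$ with $k=|S|\ge 2$ and a cyclic ordering $(i_1,\dots,i_k)$ of $S$. Because the $i_r$ are distinct and $k\ge 2$, each factor $M_{i_r i_{r+1}}$ is an off-diagonal entry, where the $k=2$ cycle $(i_1,i_2)$ uses the entries $M_{i_1i_2}$ and $M_{i_2i_1}$. Each such entry equals $-A_{i_ri_{r+1}}$, so
\[
M_{i_1i_2}\cdots M_{i_ki_1}=(-1)^k A_{i_1i_2}\cdots A_{i_ki_1}.
\]
The product of adjacency entries is $1$ exactly when every consecutive pair, including the wrap-around pair, is an edge of $G$. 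Since all vertices lie in $S$, such a pair is an edge of $G[S]$, so the product is $1$ precisely when the cyclic ordering traces a Hamiltonian cycle of $G[S]$, and $0$ otherwise.

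Summing over $C(S)$, the cyclic orderings modulo rotation, counts oriented Hamiltonian cycles of $G[S]$ modulo rotation, which is $h(G[S])$. Combining the signs with the prefactor from Theorem~\ref{thm:cycle-expansion} gives
\[
\kappa_S(G)=(-1)^{k+1}(-1)^k h(G[S])=-h(G[S]).
\]

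We check the case $k=2$ against the convention in the statement. There is exactly one cyclic ordering of $\{i,j\}$ modulo rotation, and its term is $M_{ij}M_{ji}=A_{ij}^2=A_{ij}$. This gives $\kappa_{\{i,j\}}=-A_{ij}$, matching the second item of Remark~\ref{rem:small-cumulants}. So the directed $2$-cycle is counted once, as stated.

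Nonpositivity follows immediately because $h\ge 0$. The main point needing care is the bookkeeping of what ``oriented, modulo rotation'' means: both orientations are counted separately for $k\ge 3$, while for $k=2$ the two orientations coincide under rotation. This is handled by working directly with the classes $C(S)$ from Theorem~\ref{thm:cycle-expansion}, rather than with undirected cycles.
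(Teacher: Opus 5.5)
Your proposal is correct and takes the same approach as the paper. You specialize the cycle expansion of Theorem~\ref{thm:cycle-expansion} to $M=L_G$, note that every off-diagonal factor is $0$ or $-1$ (so a cyclic ordering of $S$ contributes $(-1)^k$ exactly when it traces an oriented Hamiltonian cycle of $G[S]$), and combine this with the prefactor $(-1)^{k+1}$; your explicit handling of part (i) and of the $k=2$ case fills in details the paper leaves implicit.
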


\begin{proof}
Since the off-diagonal entries of $L_G$ are $0$ or $-1$, an ordering of $S$ contributes to Theorem~\ref{thm:cycle-expansion} if and only if every consecutive pair is an edge---that is, if and only if it determines an oriented Hamiltonian cycle in $G[S]$.  When it does, the product of $k$ factors of $-1$ gives $(-1)^k$, so $(-1)^{k+1}\cdot(-1)^k = -1$.
\end{proof}

\begin{cor}\label{cor:laplacian-LC}
For every finite graph $G$,
\[
  \det(I + \diag(t)\, L_G) \in \mathcal{LC}_n.
\]
\end{cor}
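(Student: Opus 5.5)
To prove that $f_G(t)=\det(I+\diag(t)\,L_G)$ belongs to $\mathcal{LC}_n$, we check the three defining requirements of Definition~\ref{def:LC}: the constant term is $1$, every coefficient is nonnegative, and every cumulant of order at least two is nonpositive.

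\emph{Constant term and cumulants.} Setting $t=0$ gives $\det I=1$, so $(f_G)_\varnothing=1$. The cumulant condition is exactly the final clause of Theorem~\ref{thm:hamiltonian}: for $|S|\ge 2$ we have $\kappa_S(G)=-h(G[S])\le 0$, since a count of Hamiltonian cycles is nonnegative. So only coefficient nonnegativity needs real work.

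\emph{Coefficient formula.} We first identify the coefficients of $f_G$. Expanding the determinant multilinearly in its rows, row $i$ of $I+D(t)L_G$ is $e_i+t_i(L_G)_{i,\cdot}$. Hence the coefficient of $t^S$ is the principal minor
\[
  (f_G)_S=\det\bigl((L_G)_{S,S}\bigr).
\]
The squarefree reduction causes no loss, since each $t_i$ appears only in row $i$, so no monomial contains a repeated variable.

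\emph{Nonnegativity.} The Laplacian $L_G$ is positive semidefinite, because $x^{\top}L_Gx=\sum_{ij\in E}(x_i-x_j)^2\ge 0$. Every principal submatrix of a positive semidefinite matrix is again positive semidefinite, so every principal minor is nonnegative. Therefore $(f_G)_S\ge 0$ for all $S$. (Alternatively, the all-minors matrix-tree theorem expresses each $(f_G)_S$ as a count of spanning forests, which gives nonnegativity combinatorially; the positive semidefiniteness argument is shorter.)

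Combining the three facts gives $f_G\in\mathcal{LC}_n$. No step here is a genuine obstacle. The only point needing care is the identification of $(f_G)_S$ with the principal minor, that is, checking that the multilinear expansion of the determinant matches the squarefree coefficient extraction. This holds because $\det(I+D(t)M)$ is already multiaffine in $t$ as a polynomial, so no reduction modulo $(t_1^2,\dots,t_n^2)$ is actually needed.
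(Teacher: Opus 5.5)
Your proof is correct and follows the paper's argument: the paper likewise notes $f_G(0)=1$ and cites Theorem~\ref{thm:hamiltonian} for $\kappa_S(G)\le 0$ when $|S|\ge 2$. The only difference is that the paper simply asserts that the coefficients are nonnegative, whereas you justify this by identifying $(f_G)_S=\det\bigl((L_G)_{S,S}\bigr)$ and using positive semidefiniteness of $L_G$, which usefully fills in that step.
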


\begin{proof}
The polynomial $f_G$ has nonnegative coefficients and $f_G(0)=1$.  By Theorem~\ref{thm:hamiltonian}, $\kappa_S(G)\le 0$ for $|S|\ge 2$.
\end{proof}

\begin{remark}\label{rem:graph-examples}
The trace--log formula becomes combinatorially literal: the logarithm isolates connected structures, and the cumulants are supported precisely on those induced subgraphs admitting Hamiltonian cycles.
\end{remark}

\begin{example}[Trees]
     No induced subgraph on $\ge 3$ vertices contains a Hamiltonian cycle, so
\[
  \log f_G(t) = \sum_{i\in V}\deg_G(i)\, t_i
  - \sum_{ij\in E} t_i t_j.
\]
\end{example}

\begin{example}[Cycles]
    For $C_m$, the full vertex set admits exactly two oriented Hamiltonian cycles, so $\kappa_{[m]}(C_m) = -2$. All other higher cumulants vanish.
\end{example}

\begin{example}[Complete graphs]  
    Every cyclic ordering of $[m]$ is a Hamiltonian cycle, so $\kappa_{[m]}(K_m) = -(m-1)!$.  For $S\subseteq[m]$ with $|S|=k\ge 2$,\; $\kappa_S(K_m) = -(k-1)!$.
\end{example}

\subsection{Edge Laplacians}
\label{ssec:edge-laplacian}

Let $G=(V,E)$ be a finite simple graph with $V=[d]$.  For each edge $e=\{i,j\}\in E$, fix an arbitrary orientation and set $b_e := e_i - e_j$ and $C_e := b_e b_e^\top$.  (Since $C_e = b_e b_e^\top$ is unchanged by reversing the orientation of~$e$, the matrix $T$ and hence all cumulants are independent of the choice of orientation.)  In the squarefree algebra $R_E := \R[t_e : e\in E]/(t_e^2)$, set $T := \sum_{e\in E} t_e\, C_e$ and expand
\[
  -\log\det(I - T)
  = \sum_{k\ge 1}\frac{1}{k}\,\Tr(T^k).
\]
For $S\subseteq E$ with $|S|=k$, the edge cumulant is
\[
  \kappa_S
  = \frac{1}{k}\sum_{\sigma\in\mathfrak{S}_k}
  \prod_{j=1}^k \ip{b_{e_{\sigma(j)}}}{b_{e_{\sigma(j+1)}}},
\]
where the indices are cyclic.  Although the individual inner products $\ip{b_e}{b_f}$ depend on orientations, each edge vector appears exactly twice in any cyclic product (once as the ``outgoing'' factor and once as ``incoming''), so reversing the orientation of any edge flips two signs and leaves the product invariant.  Thus $\kappa_S$ is well-defined.

\begin{prop}\label{prop:bipartite}
If $G$ is bipartite, then $\kappa_S \ge 0$ for all
$S\subseteq E$.
\end{prop}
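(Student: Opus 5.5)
The plan is to use the orientation-independence of $\kappa_S$ established just before the statement. Since $\kappa_S$ does not depend on how each edge is oriented, we are free to pick a convenient orientation, and the aim is to pick one for which every factor in the cycle formula
\[
  \kappa_S
  = \frac{1}{k}\sum_{\sigma\in\mathfrak{S}_k}
  \prod_{j=1}^k \ip{b_{e_{\sigma(j)}}}{b_{e_{\sigma(j+1)}}}
\]
is nonnegative. Then every summand is nonnegative, and so is $\kappa_S$.

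Let $V=U\sqcup W$ be a bipartition of $G$. Orient every edge $e=\{i,j\}$ with $i\in U$ and $j\in W$ as $b_e=e_i-e_j$, so the $U$-endpoint carries $+1$ and the $W$-endpoint carries $-1$. Next we compute the Gram entries $\ip{b_e}{b_f}$ for this orientation.
\begin{itemize}
\item If $e=f$, then $\ip{b_e}{b_e}=2$.
\item If $e\neq f$ share their $U$-endpoint, the only overlapping coordinate contributes $(+1)(+1)=1$.
\item If $e\neq f$ share their $W$-endpoint, it contributes $(-1)(-1)=1$.
\item If $e\neq f$ are disjoint, the inner product is $0$.
\end{itemize}
Two distinct edges of a simple graph share at most one vertex. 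Moreover, every vertex lies in exactly one of $U$, $W$ and always receives the same sign. Hence $\ip{b_e}{b_f}\in\{0,1,2\}$ for all $e,f$. Equivalently, the Gram matrix of the $b_e$ equals $2I+A(L(G))$, where $A(L(G))$ is the adjacency matrix of the line graph; bipartiteness is exactly what removes the negative signs.

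With this orientation, each cyclic product $\prod_j \ip{b_{e_{\sigma(j)}}}{b_{e_{\sigma(j+1)}}}$ is a product of nonnegative numbers. Therefore $\kappa_S\ge 0$ for every $S\subseteq E$. This covers $|S|=1$, where $\kappa_{\{e\}}=\ip{b_e}{b_e}=2$, and $S=\varnothing$ trivially.

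There is no serious obstacle. The only point needing care is that the sign-consistent orientation is legitimate, and this is exactly the orientation-invariance remark preceding the proposition: each $b_e$ appears twice in every cyclic product, so reversing an edge flips two signs. For non-bipartite $G$ no such consistent orientation exists, because an odd cycle forces some vertex to appear with both signs. This explains why the hypothesis is needed.
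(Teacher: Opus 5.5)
Your proposal is correct and is essentially the paper's argument. You orient every edge from one side of the bipartition to the other, which is legitimate by orientation-invariance. All Gram entries $\ip{b_e}{b_f}$ then lie in $\{0,1,2\}$, so every cyclic product, and hence $\kappa_S$, is nonnegative.
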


\begin{proof}
The bipartite orientation ensures every inner product $\ip{b_e}{b_f}$ is nonnegative: shared vertices in the same part contribute $+1$, and disjoint edges contribute~$0$.  Each cyclic product is therefore nonnegative.
\end{proof}

\begin{remark}\label{rem:bipartite-necessary}
The converse also holds.  Suppose $G$ contains an odd cycle $C = (v_1,v_2,\dots,v_{2\ell+1})$, and let $S = \{v_1v_2, v_2v_3,\dots,v_{2\ell+1}v_1\}$ be its edge set. Orient the edges cyclically, so that each edge is directed $v_i \to v_{i+1}$ (indices modulo $2\ell+1$).  Then $b_{v_i v_{i+1}} = e_{v_i} - e_{v_{i+1}}$, and for each consecutive pair one computes
\[
  \ip{b_{v_i v_{i+1}}}{b_{v_{i+1} v_{i+2}}} = -1.
\]
Thus the cyclic product is $(-1)^{2\ell+1} = -1$, and $\kappa_S = \frac{1}{k}(-1)(2k) = -2< 0$.

Therefore $G$ is bipartite if and only if all edge cumulants are nonnegative.
\end{remark}


\appendix

\section{Projective geometries and the \(q\)-additive coefficients}

In this appendix we record a small observation showing that the squarefree picture from Section~\ref{sec:multilinear} has a projective-geometric analog. This connects the diamond product from \cite{Sin26} with the $q$-additive finite free convolution recently considered by Mart\'inez-Finkelshtein, Morales, and Perales~\cite{MFMP26}.  The point is that the coefficients of the latter arise naturally from the rank-radial part of the diamond product on projective geometries.

Throughout this appendix, $q$ denotes a prime power.  Thus $q$ is the size of the finite field considered and \emph{not} the analytic $q$-parameter of basic hypergeometric series. Let
\[
    L=\PG(n-1,q)
\]
be the lattice of linear subspaces of $\mathbb F_q^n$, ordered by inclusion.  As in \cite{Sin26}, the diamond product on $\R[L]$ is
defined on lattice basis vectors by
\[
    e_x\diamond e_y
    =
    \begin{cases}
        e_{x\vee y}, & x\wedge y=\hat 0,\\
        0, & x\wedge y\neq \hat 0,
    \end{cases}
\]
where $\hat 0$ denotes the minimal element of $L$. We refer the reader to \cite{Sin26} for additional details and discussion.

\begin{remark}
The Boolean lattice case recovers the squarefree algebra appearing in Section~\ref{sec:multilinear}.  Indeed, if \(L=2^{[n]}\), then the diamond product is disjoint union, hence $\mathbb R[L]$ identifies with
\[
    \mathbb R[t_1,\dotsc,t_n]/(t_1^2,\dotsc,t_n^2)
\]
by sending a subset $S\subseteq[n]$ to \(t^S\).  Under this identification, rank-radial coefficients are precisely the \(\beta_n\)-coordinates used above.
The projective calculation below is therefore the corresponding rank-radial calculation for the next homogeneous geometric lattice, namely the lattice of subspaces of \(\mathbb F_q^n\).
\end{remark}

For $0\leq k\leq n$, set
\[
    L_k:=\{x\in L: \dim(x)=k\},
    \qquad
    R_k:=\sum_{x\in L_k}e_x.
\]
Then by homogeneity the span of $R_0,\dotsc,R_n$ is closed under the diamond product.

\begin{prop}\label{prop:projective-radial-diamond}
For \(a+b=k\), one has
\[
    R_a\diamond R_b
    =
    q^{ab}\,\binom{k}{a}_q R_k,
\]
where $\bn{n}{k}_q$ is the usual $q$-binomial coefficient.
Consequently, if
\[
    f=\sum_{k=0}^n \widehat f_k R_k,
    \qquad
    g=\sum_{k=0}^n \widehat g_k R_k,
\]
then
\[
    f\diamond g =
    \sum_{k=0}^n
    \left(
        \sum_{a+b=k}
        q^{ab}\,\binom{k}{a}_q
        \widehat f_a\,\widehat g_b
    \right)R_k.
\]
\end{prop}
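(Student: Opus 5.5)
Expand the product bilinearly:
\[
R_a\diamond R_b=\sum_{x\in L_a}\sum_{y\in L_b} e_x\diamond e_y .
\]
By the definition of the diamond product, only pairs with $x\wedge y=\hat 0$ survive. For such a pair we have $\dim(x\vee y)=a+b=k$ by the dimension formula $\dim(x\vee y)=\dim x+\dim y-\dim(x\wedge y)$ for subspaces. Hence $R_a\diamond R_b=\sum_{z\in L_k} N(z)\,e_z$, where
\[
N(z)=\#\{(x,y)\in L_a\times L_b:\ x\cap y=0,\ x+y=z\},
\]
that is, the number of ordered direct-sum decompositions $z=x\oplus y$ with $\dim x=a$ and $\dim y=b$. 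Since $GL_n(\mathbb F_q)$ acts transitively on $L_k$ and preserves the diamond product, $N(z)$ depends only on $k$. So the first claim reduces to showing $N(z)=q^{ab}\binom{k}{a}_q$ for a fixed $k$-dimensional space $z\cong\mathbb F_q^k$.

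The main step is this count, done in two stages. First, choose $x\subseteq z$ of dimension $a$; there are $\binom{k}{a}_q$ choices, by the standard count of subspaces of $\mathbb F_q^k$.

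Second, for fixed $x$, count the complements $y$ of $x$ in $z$. Complements correspond bijectively to linear sections of the quotient map $z\to z/x$: equivalently, fix one complement $y_0$, and then every complement is the graph of a linear map $y_0\to x$. This gives $|\mathrm{Hom}(\mathbb F_q^b,\mathbb F_q^a)|=q^{ab}$ complements. Multiplying the two stages yields $N(z)=q^{ab}\binom{k}{a}_q$. As a check, one can alternatively count ordered pairs of bases and divide by $|GL_a||GL_b|$.

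The consequence then follows from bilinearity of $\diamond$: expand $f\diamond g=\sum_{a,b}\widehat f_a\widehat g_b\,R_a\diamond R_b$ and collect terms by $k=a+b$. When $a+b>n$ there are no pairs with $x\cap y=0$, so those products vanish, consistent with the sum ranging only up to $n$. The only real obstacle is the complement count, and the graph-of-a-linear-map bijection handles it cleanly.
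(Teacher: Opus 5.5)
Your proof is correct and is the same as the paper's: both identify the coefficient of each $e_z$ with $\dim z=k$ as the number of ordered decompositions $z=x\oplus y$, counted as $\binom{k}{a}_q$ choices of $x$ times $q^{ab}$ complements. You also justify the complement count (graphs of linear maps $y_0\to x$) and note that terms with $a+b>n$ vanish, both of which the paper leaves implicit. The $GL_n(\mathbb F_q)$-transitivity remark is harmless but unnecessary, since you count for an arbitrary $z$.
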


\begin{proof}
Fix a $k$-dimensional subspace $z\leq \mathbb F_q^n$.  The coefficient of $e_z$ in $R_a\diamond R_b$ is the number of ordered decompositions
\[
    z=x\oplus y,\qquad \dim(x)=a,\quad \dim(y)=b.
\]
There are $\bn{k}{a}_q$ choices for $x\leq z$, and for each such $x$ there are $q^{ab}$ complements $y$ to $x$ in $z$. Hence the coefficient of every $e_z$ with $\dim(z)=k$ is $q^{ab}\binom{k}{a}_q$, proving the claim.
\end{proof}

Thus the natural rank-radial coefficient system for the projective diamond algebra is 
\[ 
    (\widehat f,\widehat g) \longmapsto \widehat h, \qquad \widehat h_k = \sum_{a+b=k} q^{ab}\binom{k}{a}_q \widehat f_a\,\widehat g_b. 
\] 
This is the direct analog of the Boolean formula 
\[ 
    \widehat h_k = \sum_{a+b=k} \binom{k}{a} \widehat f_a\,\widehat g_b,
\] 
which is the rank-radial form of the Boolean diamond product, i.e., squarefree multiplication. To compare this with the $q$-additive finite free convolution, one passes from rank-radial coordinates to total rank-mass coordinates. Set
\[ 
    \gamma^q_k(f):=\binom{n}{k}_q \widehat f_k. 
\] 
Since $\binom{n}{k}_q=|L_k|$, the number $\gamma^q_k(f)$ is the total coefficient mass of $f$ on the rank $k$ layer. If \(h=f\diamond g\), then 
\[ 
    \gamma_k^q(h) = \sum_{a+b=k} q^{ab} \frac{ \binom{n}{k}_q\binom{k}{a}_q }{ \binom{n}{a}_q\binom{n}{b}_q }\, \gamma^q_a(f)\,\gamma^q_b(g). 
\] 
Using 
\[ 
    \bn{n}{k}_q = \frac{(q;q)_n}{(q;q)_k(q;q)_{n-k}}, 
\] 
this becomes 
\[ 
    \gamma_k^q(h) = \sum_{a+b=k} q^{ab} \frac{ (q;q)_{n-a}(q;q)_{n-b} }{ (q;q)_n(q;q)_{n-k} }\, \gamma^q_a(f)\, \gamma^q_b(g). 
\] 

Now put $Q=q^{-1}$, where $Q$ is the analytic parameter used in the $Q$-hypergeometric convention. Since $k=a+b$, the elementary identity \[ 
    \frac{ (Q;Q)_{n-a}(Q;Q)_{n-b} }{ (Q;Q)_n(Q;Q)_{n-k} } = 
    q^{ab} \frac{ (q;q)_{n-a}(q;q)_{n-b} }{ (q;q)_n(q;q)_{n-k} } 
\] 
gives 
\[
    \gamma_k^q(h) = \sum_{a+b=k} \frac{ (Q;Q)_{n-a}(Q;Q)_{n-b} }{ (Q;Q)_n(Q;Q)_{n-k} } \gamma^q_a(f)\, \gamma^q_b(g). 
\] 
This is exactly the coefficient factor appearing in the $Q$-additive finite free convolution. 
The additional factor $(-1)^k$ in the usual definition of $\boxplus_{n,Q}$ comes from the standard characteristic-polynomial coefficient convention. 

Thus, after the standard reversal and sign convention, the \(Q\)-additive finite free convolution with $Q=q^{-1}$ is the total rank-mass form of the rank-radial projective diamond product.

\begin{remark} 
    The $Q$-multiplicative convolution defined by Lamprecht \cite{Lam16} admits a parallel interpretation at the level of the radial kernel. The identity element for $\boxtimes_{n,Q}$ is 
    \[ 
    e_{n,Q}(x) :=\prod_{j=1}^n(1-Q^{-j}x). 
    \] 
    When \(Q=q^{-1}\), this becomes 
    \[ 
        e_{n,q^{-1}}(x)=\prod_{j=1}^n(1-q^j x), 
    \] 
    which is naturally related to the characteristic polynomial of the projective geometry up to a change of variable. Thus the $Q$-multiplicative convolution should be viewed as the corresponding Hadamard operation in the M\"obius-normalized radial kernel. We do not use this observation here. 
\end{remark}


\section*{Acknowledgments}
The author thanks Daniel Perales for helpful comments on the relation between Corollary~\ref{cor:real-rooted-ffac} and the classical Grace--Walsh--Szeg\H{o} theory, and for pointing to Marden’s treatment of apolar polynomials.
The author used OpenAI’s ChatGPT for editorial assistance and for exploring examples and formulations during the development of this work. All mathematical content, results, and conclusions are solely those of the author.


\end{document}